\DeclareMathOperator{\Wr}{Wr}
\DeclareMathOperator{\htt}{ht}
\newtheorem{theorem}{Theorem}[section]
\newtheorem{lemma}[theorem]{Lemma}
\newtheorem{proposition}[theorem]{Proposition}
\theoremstyle{definition}
\newtheorem{example}[theorem]{Example}
\newtheorem{remark}[theorem]{Remark}
\numberwithin{equation}{section}
\author{Niels Bonneux}
\date{\today}
\title{Asymptotic behavior of Wronskian polynomials \\ that are factorized via $p$-cores and $p$-quotients}
\let\save@mathaccent\mathaccent
\newcommand*\if@single[3]{
	\setbox0\hbox{${\mathaccent"0362{#1}}^H$}%
	\setbox2\hbox{${\mathaccent"0362{\kern0pt#1}}^H$}%
	\ifdim\ht0=\ht2 #3\else #2\fi
}
\newcommand*\rel@kern[1]{\kern#1\dimexpr\macc@kerna}
\newcommand*\widebar[1]{\@ifnextchar^{{\wide@bar{#1}{0}}}{\wide@bar{#1}{1}}}
\newcommand*\wide@bar[2]{\if@single{#1}{\wide@bar@{#1}{#2}{1}}{\wide@bar@{#1}{#2}{2}}}
\newcommand*\wide@bar@[3]{%
	\begingroup
	\def\mathaccent##1##2{%
		\let\mathaccent\save@mathaccent
		\if#32 \let\macc@nucleus\first@char \fi
		\setbox\z@\hbox{$\macc@style{\macc@nucleus}_{}$}%
		\setbox\tw@\hbox{$\macc@style{\macc@nucleus}{}_{}$}%
		\dimen@\wd\tw@
		\advance\dimen@-\wd\z@
		\divide\dimen@ 3
		\@tempdima\wd\tw@
		\advance\@tempdima-\scriptspace
		\divide\@tempdima 10
		\advance\dimen@-\@tempdima
		\ifdim\dimen@>\z@ \dimen@0pt\fi
		\rel@kern{0.6}\kern-\dimen@
		\if#31
		\overline{\rel@kern{-0.6}\kern\dimen@\macc@nucleus\rel@kern{0.4}\kern\dimen@}%
		\advance\dimen@0.4\dimexpr\macc@kerna
		\let\final@kern#2%
		\ifdim\dimen@<\z@ \let\final@kern1\fi
		\if\final@kern1 \kern-\dimen@\fi
		\else
		\overline{\rel@kern{-0.6}\kern\dimen@#1}%
		\fi
	}%
	\macc@depth\@ne
	\let\math@bgroup\@empty \let\math@egroup\macc@set@skewchar
	\mathsurround\z@ \frozen@everymath{\mathgroup\macc@group\relax}%
	\macc@set@skewchar\relax
	\let\mathaccentV\macc@nested@a
	\if#31
	\macc@nested@a\relax111{#1}%
	\else
	\def\gobble@till@marker##1\endmarker{}%
	\futurelet\first@char\gobble@till@marker#1\endmarker
	\ifcat\noexpand\first@char A\else
	\def\first@char{}%
	\fi
	\macc@nested@a\relax111{\first@char}%
	\fi
	\endgroup
}
\begin{document}
\maketitle
	
\begin{abstract}
In this paper we consider Wronskian polynomials labeled by partitions that can be factorized via the combinatorial concepts of $p$-cores and $p$-quotients. We obtain the asymptotic behavior for these polynomials when the $p$-quotient is fixed while the size of the $p$-core grows to infinity. For this purpose, we associate the $p$-core with its characteristic vector and let all entries of this vector simultaneously tend to infinity. This result generalizes the Wronskian Hermite setting which is recovered when $p=2$. 
\end{abstract}

\section{Introduction}\label{sec:Introduction}
In this paper we study the asymptotic behavior of the Wronskian polynomials
\begin{equation}\label{eq:IntroQLambda}
	q_\lambda 
		= \frac{\Wr[q_{n_1},q_{n_2},\dots,q_{n_{r}}]}{\Delta(n_{\lambda})}
		= \frac{\det \left(\frac{d^{i-1}}{dx^{i-1}} \, q_{n_j}\right)_{1\leq i ,j \leq r}}{\prod\limits_{i<j}(n_j-n_i)}
\end{equation}
which are labeled by partitions $\lambda=(\lambda_1,\lambda_2,\dots,\lambda_r)$ and where the associated degree vector $n_\lambda=(n_1,n_2,\dots,n_r)$, defined by~$n_i=\lambda_i-i+r$ for~$i=1,2,\dots,r$, describes the degrees of the polynomial entries in the Wronskian. These entries are defined via the exponential generating function
\begin{equation}\label{eq:IntroExponentialGeneratingFunction}
	\sum_{n=0}^\infty q_n(x) \frac{t^n}{n!} 
		= \exp\left(tx - \frac{t^p}{p}\right)
\end{equation}
where~$p$ is a fixed positive integer. The Wronskian in~\eqref{eq:IntroQLambda} is divided by the Vandermonde determinant of the degree vector, i.e., $\Delta(n_{\lambda})=\prod_{i<j}(n_j-n_i)$, and acts as a normalization constant to obtain monic polynomials. The Wronskian polynomials~\eqref{eq:IntroQLambda} appear in several applications for specific values of~$p$, most notably for $p=2$ and $p=3$. For~$p=1$, the set-up trivializes.

When~$p=2$, we get Wronskians of Hermite polynomials which are of interest as they describe the rational solutions of the fourth Painlev\'e equation and its higher order generalizations~\cite{Clarkson-zeros,Clarkson_GomezUllate_Grandati_Milson,Kajiwara_Ohta-PIV,Noumi_Yamada}. The asymptotic behavior of the polynomial zeros associated with Painlev\'e IV is studied in several works~\cite{Buckingham,Masoero_Roffelsen,Masoero_Roffelsen-2019}. Another domain where these Wronskian Hermite polynomials appear, is in the field of exceptional orthogonal polynomials. A description of exceptional Hermite polynomials, labeled by partitions, is given in~\cite{GomezUllate_Grandati_Milson-Hermite}, their zeros are studied in~\cite{Kuijlaars_Milson}, and recurrence relations are obtained in~\cite{Bonneux_Stevens,Duran-Recurrence,GomezUllate_Kasman_Kuijlaars_Milson}. 

When~$p=3$, the polynomials~$(q_n)_n$ are multiple orthogonal polynomials~\cite{VanAssche}, and the net of polynomials~$(q_\lambda)_{\lambda}$ comprises the Yablonskii-Vorobiev polynomials~\cite{Kajiwara_Ohta}. They are associated with the rational solutions of the second Painlev\'e equation and their zeros are studied in~\cite{Bertola_Bothner,Buckingham_Miller-1,Buckingham_Miller-2}. 

In this paper, we do not specify the value of~$p$ and merely require it to be a positive integer. For such general~$p$, the polynomial sequence~$(q_n)_{n}$ is an Appell sequence and the combinatorial framework which associates a Wronskian polynomial~$q_\lambda$ defined as in~\eqref{eq:IntroQLambda} with every partition, was dubbed \textit{Wronskian Appell polynomials} in~\cite{Bonneux_Hamaker_Stembridge_Stevens}. An appealing feature of this combinatorial framework is the existence of a recurrence relation that generates the whole net of polynomials. This relation becomes particularly remarkable for the Appell sequences~$(q_n)_n$ that we consider in this paper, i.e., those with a generating function as in~\eqref{eq:IntroExponentialGeneratingFunction}, see~\cite[Section~7.2]{Bonneux_Hamaker_Stembridge_Stevens}. The particular form of this recurrence relation was exploited in~\cite{Bonneux_Dunning_Stevens} to show that every polynomial~$q_\lambda$ factorizes according to the combinatorial concepts of~$p$-cores and $p$-quotients~\cite{Garvan_Kim_Stanton,James_Kerber,MacDonald} for the same~$p$ as in~\eqref{eq:IntroExponentialGeneratingFunction}. The $p$-core is the unique partition obtained by removing as many border strips of size~$p$ as possible from the original partition, its size describes the multiplicity of the origin as a zero of~$q_\lambda$. The $p$-quotient, which is an ordered $p$-tuple of partitions obtained via $p$-modular decomposition of the original partition, contains all other information about the coefficients of the polynomial. More concretely, Theorem~6 in~\cite{Bonneux_Dunning_Stevens} states that
\begin{equation}\label{eq:IntroFactorization}
	q_\lambda(x) =
		x^{|\bar{\lambda}|} R_{\lambda}(x^p)
\end{equation}
where $\bar{\lambda}$ denotes the $p$-core of~$\lambda$, and the polynomial~$R_{\lambda}$ can be described via the $p$-quotient of~$\lambda$. In the same paper, the asymptotic behavior of these polynomials is studied for~$p=2$: fix a $2$-quotient $(\mu^{(0)},\mu^{(1)})$, write the $2$-core as the partition $(k,k-1,\dots,2,1)$, and let~$k$ tend to infinity, then the asymptotic behavior for the associated polynomials is given by
\begin{equation}\label{eq:AsymptoticBehavior2}
	\lim_{k \to + \infty} \frac{R_{\lambda(k)}(2kx)}{(2k)^{|\mu^{(0)}|+|\mu^{(1)}|}}
	= (x+1)^{|\mu^{(0)}|} \, (x-1)^{|\mu^{(1)}|}
\end{equation}
where the partition~$\lambda(k)$ is identified by the fixed $2$-quotient $(\mu^{(0)},\mu^{(1)})$ and the $k$-depending $2$-core $(k,k-1,\dots,2,1)$, see Theorem~5 in~\cite{Bonneux_Dunning_Stevens}. We observe that, as~$k$ grows, the size $|\mu^{(0)}|$ (respectively~$|\mu^{(1)}|$) indicates the number of zeros that are attracted by~$-1$ (respectively $1$).

We now generalize this asymptotic result to arbitrary~$p$. To do so, we describe the $p$-core via a $p$-tuple of integers $(c_0,c_1,...c_{p-1})$ called the characteristic vector~\cite{Brunat_Nath,Garvan_Kim_Stanton}. Next we let this vector depend on $k\in\mathbb{N}$, i.e., $c(k)=(c_0(k), c_1(k),\dots,c_{p-1}(k))$, such that all entries grow linearly at most: for each~$i$ we require that $c_i(k)= a_i k + o(k)$ as $k\to+\infty$, with $a_i\in\mathbb{R}$. Then we identify the partition~$\lambda(k)$ as the unique partition with $p$-quotient $\mu=(\mu^{(0)},\mu^{(1)},\dots,\mu^{(p-1)})$ and $p$-core described by the characteristic vector~$c(k)$. Finally, we let $k$ tend to infinity and obtain the asymptotic behavior for the associated polynomials $R_{\lambda(k)}$ defined in~\eqref{eq:IntroFactorization}. 

\paragraph{Main result.}
Fix an integer $p\geq1$ and a $p$-quotient~$\mu$. For any $k\in\mathbb{N}$, let $\lambda(k)$ be the partition identified via its characteristic vector $c(k)$ and its $p$-quotient~$\mu$. If $c_i(k)= a_i k + o(k)$ as $k\to+\infty$ for all $i$, then the polynomials~$R_{\lambda(k)}$ satisfy
\begin{equation}\label{Intro:Result}
	\lim_{k\to + \infty} \frac{R_{\lambda(k)}\left((pk)^{p-1}x\right)}{(pk)^{(p-1)|\mu|}}
		= \prod_{i=0}^{p-1} (x-\alpha_i)^{|\mu^{(i)}|}
\end{equation}
where~$\alpha_i=\prod_{j\neq i} (a_i-a_j)$ for all~$i$. \\

In the described set-up, we let all entries of the characteristic vector simultaneously tend to infinity and obtain that, as~$k$ grows, $|\mu^{(i)}|$~zeros are attracted by the real value~$\alpha_i$. We reformulate this main result in Theorem~\ref{thm:AsymptoticResult}.

The paper is constructed as follows. In Section~\ref{sec:Preliminaries} we first introduce the necessary combinatorial concepts and then use them to label Wronskian polynomials. The main difference with the notions used in~\cite{Bonneux_Dunning_Stevens} is that we introduce characteristic vectors to describe $p$-cores which makes the statement of the behavior for the associated polynomials easier. The asymptotic result is given in Section~\ref{sec:MainResult} and its proof in Section~\ref{sec:Proof}.

\section{Preliminaries}\label{sec:Preliminaries}
We introduce partitions and their associated $p$-cores and $p$-quotients, as well as characteristic vectors which are in 1-1 correspondence with $p$-cores. These combinatorial concepts are well-studied and all the described material in Section~\ref{sec:Partitions} is retrieved from~\cite{Brunat_Nath}. We only explain the necessary concepts to state our results whereas extra material can be found in~\cite{Brunat_Nath,Garvan_Kim_Stanton,James_Kerber}. In Section~\ref{sec:WP} we define Wronskian polynomials which are labeled by partitions and can be factorized via $p$-cores and $p$-quotients as described in~\cite{Bonneux_Dunning_Stevens}.

\subsection{\texorpdfstring{Partitions and their $p$-cores and $p$-quotients}{Partitions and their p-cores and p-quotients}}\label{sec:Partitions}
A \emph{partition} $\lambda=(\lambda_1,\lambda_2,\dots,\lambda_r)$ is a sequence of integers such that $\lambda_1\geq \lambda_2\geq \dots \geq \lambda_r>0$. The number~$r$ is called the \emph{length} of the partition and the \emph{size} is denoted by~$|\lambda|=\sum_i \lambda_i$. We associate each partition~$\lambda$ with its \emph{degree vector} $n_\lambda=(n_1,n_2,\dots,n_r)$ defined by~$n_i=\lambda_i+r-i$ for~$i=1,2,\dots,r$, and therefore $n_1>n_2>\dots>n_r>0$. We indicate the degree vector in the associated \emph{Maya diagram} $M_\lambda$ which is defined as
\begin{equation}\label{eq:MayaDiagram}
	M_\lambda
		= \{ n \in \mathbb{Z} \mid n<0\} \cup \{n_i \mid 1\leq i \leq r\} \subset \mathbb{Z}.
\end{equation}
This diagram can be visualized by a doubly-infinite sequence of consecutive boxes that are either empty or are filled with a bullet. The boxes are labeled by the integers and the~$n^\textrm{th}$ box is filled precisely when $n\in M_\lambda$. Furthermore, a vertical line is placed between the boxes labeled with~$-1$ and~$0$; subsequently we can omit the labels. We may shift the vertical line such that the sequence of filled and empty boxes remains unchanged, but the labeling differs. We call such Maya diagrams equivalent to~$M_{\lambda}$ and denote them by~$M_{\lambda}+t$ where the integer~$t$ indicates the shift, that is, $m\in M_{\lambda}$ if and only if $m+t\in M_{\lambda}+t$. There is a unique shift such that the number of empty boxes to the left of the vertical line equals the number of filled boxes to the right of it. We write~$\widehat{M}_{\lambda}$ to denote this specific Maya diagram. In mathematical physics, see for example~\cite{Date_Jimbo_Miwa}, particles represent filled boxes while empty boxes are called holes. Moreover, the number of filled boxes to the right minus the number of empty boxes to the left of the vertical line is usually called the \emph{charge} of the diagram, and so $\widehat{M}_{\lambda}$ has charge zero. Using this definition, we can create a bijection which maps Maya diagrams to their charge and associated partition so that equivalent Maya diagrams are mapped to the same partition but different charges.

\begin{example}\label{ex:EquivalentMayaDiagrams}
Let~$\lambda=(8,8,6,6,2,2,1)$ so that~$n_\lambda=(14,13,10,9,4,3,1)$. The integers in the latter sequence are indicated to the right of the vertical line in the first Maya diagram of Figure~\ref{fig:EquivalentMayaDiagrams}. If we move the vertical line 7~steps to the right, then there are 4~empty boxes to the left of it and 4~filled boxes to the right. So $\widehat{M}_{\lambda}=M_{\lambda}-7$. The Maya diagram $\widehat{M}_{\lambda}$ reveals the Young diagram (in Russian style) of the partition as shown in Figure~\ref{fig:RussianYoungDiagram}. Each filled dot corresponds to a downwards step whereas the empty ones give rise to an upwards step.
	
\begin{figure}[t]
	\centering
	\begin{tikzpicture}[scale=0.5]
		\draw[very thin,color=gray] (-4.5,1) grid (18.5,0);
		\draw[thick,color=black] (0,1.4) -- (0,-0.4);
		\draw (-5,0.5) node {$\dots$};
		\draw (19,0.5) node {$\dots$};
		\draw (-7,0.5) node {$M_{\lambda}$};
		\foreach \x in {-4,-3,-2,-1,1,3,4,9,10,13,14}
		{
			\draw (\x+0.5,0.5) node {$\bullet$};
		}
	\end{tikzpicture}

	\begin{tikzpicture}[scale=0.5]
		\draw[very thin,color=gray] (-4.5,1) grid (18.5,0);
		\draw[thick,color=black] (7,1.4) -- (7,-0.4);
		\draw (-5,0.5) node {$\dots$};
		\draw (19,0.5) node {$\dots$};
		\draw (-7,0.5) node {$\widehat{M}_{\lambda}$};
		\foreach \x in {-4,-3,-2,-1,1,3,4,9,10,13,14}
		{
			\draw (\x+0.5,0.5) node {$\bullet$};
		}
	\end{tikzpicture}
	\caption{Two equivalent Maya diagrams (Example~\ref{ex:EquivalentMayaDiagrams}).}
	\label{fig:EquivalentMayaDiagrams}
\end{figure}
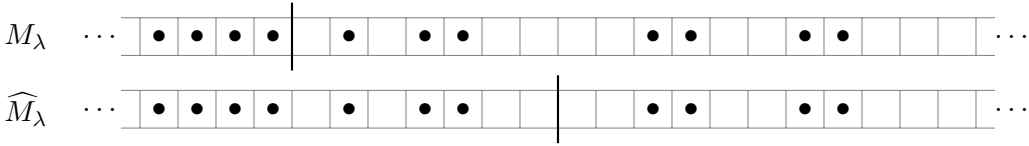
\end{example}

\begin{figure}[t]
	\centering
	\includegraphics[height=4cm]{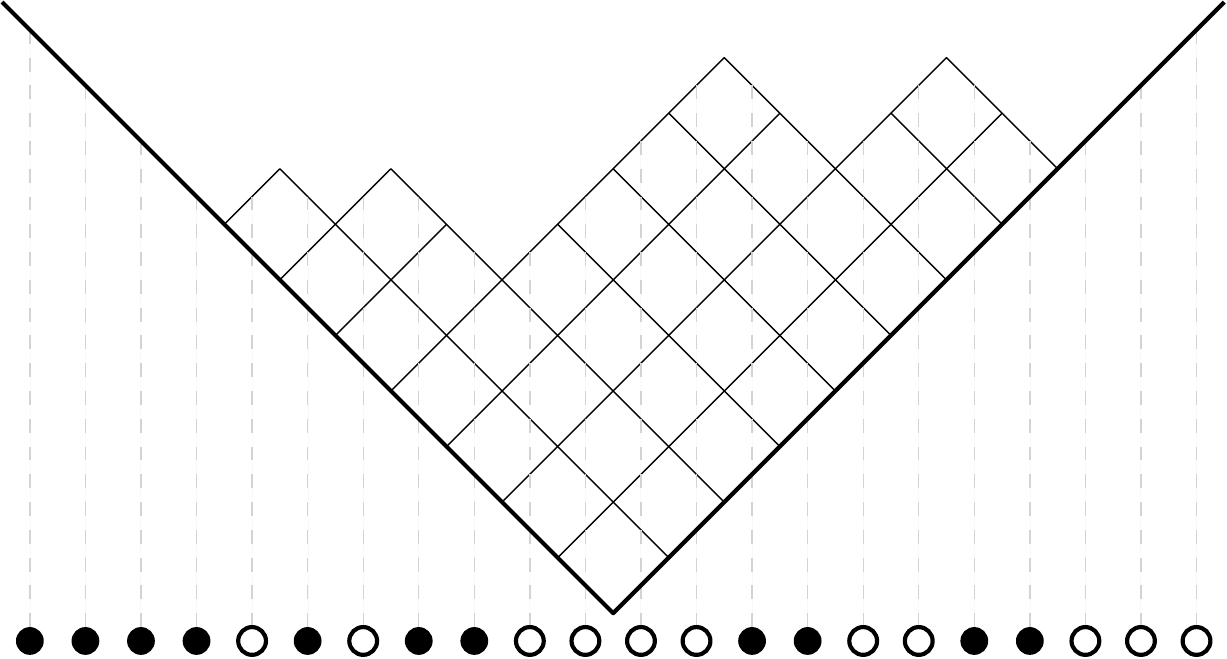}
	\caption{Young diagram in Russian style (Example~\ref{ex:EquivalentMayaDiagrams}).}
	\label{fig:RussianYoungDiagram}
\end{figure}

With each partition~$\lambda$ we can associate its \emph{$p$-core} and its \emph{$p$-quotient} for any positive integer~$p$. The $p$-quotient is a $p$-tuple of partitions denoted by~$\mu=(\mu^{(0)},\mu^{(1)},\dots,\mu^{(p-1)})$. It is defined via the $p$-modular decomposition of the degree vector~$n_\lambda$. The $p$-core is the partition~$\bar{\lambda}$ obtained by removing as many border strips (also called rim hooks) of size~$p$ as possible. Here, we describe the $p$-core via the associated \emph{characteristic vector} $c_{\lambda}=(c_0,c_1,\dots,c_{p-1})$ which is a $p$-tuple of integers such that $\sum_i c_i=0$. We now give the precise definitions.

\subsubsection{\texorpdfstring{From a partition to its characteristic vector and its $p$-quotient and vice versa}{From a partition to its characteristic vector and its p-quotient and vice versa}}

Fix a positive integer~$p$ as well as a partition~$\lambda$ and consider the Maya diagram $\widehat{M}_\lambda$. Define the Maya diagrams $M^{(0)}, M^{(1)},\dots,M^{(p-1)}$ as the $p$-modular decomposition of~$\widehat{M}_\lambda$, that is
\begin{equation}\label{eq:ModularDecomposition}
	M^{(i)}
		=\{ m \in \mathbb{Z} \mid pm+i \in\widehat{M}_{\lambda} \}
\end{equation}
for~$i=0,1,\dots,p-1$. Although the Maya diagram~$\widehat{M}_\lambda$ is defined so that there are as many empty boxes to the left of the vertical line as filled boxes to the right of it, it is not granted that the same holds for each $M^{(i)}$. We therefore define the integers $c_0,c_1,\dots,c_{p-1}$ such that
\begin{equation}\label{eq:DefinitionEntriesCharacteristicVector}
	\widehat{M}^{(i)}
		= M^{(i)} - c_i
\end{equation}
for each~$i$. We define $c_{\lambda}=(c_0,c_1,\dots,c_{p-1})$ as the characteristic vector of~$\lambda$ and the described process ensures that $\sum_i c_i=0$. Furthermore, for each~$i$ set $\mu^{(i)}$ as the partition such that its Maya diagram~$M_{\mu^{(i)}}$ is equivalent to~$M^{(i)}$, and hence also to~$\widehat{M}^{(i)}$. The tuple of partitions $\mu=(\mu^{(0)},\mu^{(1)},\dots,\mu^{(p-1)})$ is called the $p$-quotient of~$\lambda$ and its size is $|\mu|=\sum_{i} |\mu^{(i)}|$.

This process to obtain the $p$-quotient and the characteristic vector from a partition can be reversed. To do so, we first construct $M^{(0)},M^{(1)},\dots,M^{(p-1)}$ from the given $p$-quotient and characteristic vector, i.e., for each~$i$ set~$M^{(i)}$ as the equivalent Maya diagram of~$M_{\mu^{(i)}}$ such that $M^{(i)} = \widehat{M}_{\mu^{(i)}} + c_i$. Next, we set~$M$ as the Maya diagram 
\begin{equation}\label{eq:DefinitionM}
	M
		= \bigcup\limits_{i=0}^{p-1} \{ pm+i \mid m\in M^{(i)} \}.
\end{equation}
Then the associated partition~$\lambda$ is the unique partition such that $M=\widehat{M}_{\lambda}$.

\begin{example}\label{ex:quotient}
Reconsider the previous example where $\lambda=(8,8,6,6,2,2,1)$ and take~$p=3$. The Maya diagrams $M^{(0)}, M^{(1)}$ and $M^{(2)}$ are visualized in Figure~\ref{fig:quotient}. We directly observe that $\mu=((1,1), (4), (2,1))$, moreover, the dotted lines display the locations such that there are as many empty boxes to the left as filled boxes to the right of it. We therefore find that the characteristic vector is given by~$c_{\lambda}=(2,-1,-1)$.
\end{example}

\begin{remark}
The $p$-modular decompositions of two equivalent Maya diagrams are the same up to a cyclic permutation: if $\widetilde{M}=M+t$ for some integer~$t$ and if $M^{(0)},M^{(1)},\dots,M^{(p-1)}$ and $\widetilde{M}^{(0)},\widetilde{M}^{(1)},\dots,\widetilde{M}^{(p-1)}$ denote the decomposed Maya diagrams defined similarly as~\eqref{eq:ModularDecomposition}, then for each~$i$ we have that~$M^{(i)}$ and~$\widetilde{M}^{(i+t)}$ are equivalent Maya diagrams, where we interpret the indexes modulo~$p$. In the construction of the $p$-quotient of~$\lambda$, we use the (unique) Maya diagram~$\widehat{M}_{\lambda}$ and thereby fix the ordering in the $p$-quotient. Any other equivalent Maya diagram would lead to the same partitions in the $p$-quotient, but possibly in a different order.
\end{remark}

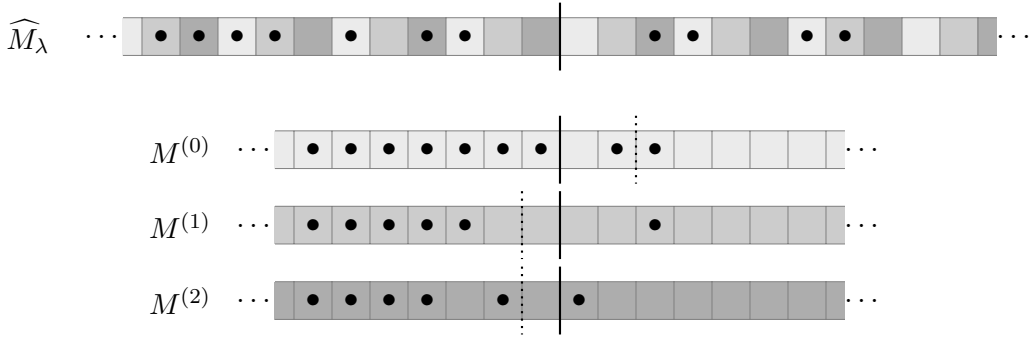
\begin{figure}[t]
	\centering
	\begin{tikzpicture}[scale=0.5]
	\foreach \x in {-9,-6,-3,0,3,6,9}
	{
		\draw[fill=black!08!white,draw=none] (7+\x,1) rectangle (7+1+\x,0);
	}
	\foreach \x in {-10,-7,-4,-1,2,5,8}
	{
		\draw[fill=black!32!white,draw=none] (7+\x,1) rectangle (7+1+\x,0);
	}
	\foreach \x in {-11,-8,-5,-2,1,4,7,10}
	{
		\draw[fill=black!20!white,draw=none] (7+\x,1) rectangle (7+1+\x,0);
	}
	\draw[fill=black!32!white,draw=none] (7+11,1) rectangle (7+0.5+11,0);
	\draw[fill=black!08!white,draw=none] (7-12+0.5,1) rectangle (7-11,0);
	\draw[very thin,color=gray] (-4.5,1) grid (18.5,0);
	\draw[thick,color=black] (7,1.4) -- (7,-0.4);
	\draw (-5,0.5) node {$\dots$};
	\draw (19,0.5) node {$\dots$};
	\draw (-7,0.5) node {$\widehat{M}_{\lambda}$};
	\foreach \x in {-4,-3,-2,-1,1,3,4,9,10,13,14}
	{
		\draw (\x+0.5,0.5) node {$\bullet$};
	}
	
	\draw[fill=black!08!white,draw=none] (-0.5,-2) rectangle (14.5,-3);
	\draw[very thin,color=gray] (-0.5,-2) grid (14.5,-3);
	\draw[thick,color=black] (7,-1.6) -- (7,-3.4);
	\draw[thick,color=black,dotted] (9,-1.6) -- (9,-3.4);
	\draw (-1,-2.5) node {$\dots$};
	\draw (15,-2.5) node {$\dots$};
	\draw (-3,-2.5) node {$M^{(0)}$};
	\foreach \x in {-7,-6,-5,-4,-3,-2,-1,1,2}
	{
		\draw (\x+7+0.5,-2.5) node {$\bullet$};
	}
	
	\draw[fill=black!20!white,draw=none] (-0.5,-4) rectangle (14.5,-5);
	\draw[very thin,color=gray] (-0.5,-4) grid (14.5,-5);
	\draw[thick,color=black] (7,-3.6) -- (7,-5.4);
	\draw[thick,color=black,dotted] (6,-3.6) -- (6,-5.4);
	\draw (-1,-4.5) node {$\dots$};
	\draw (15,-4.5) node {$\dots$};
	\draw (-3,-4.5) node {$M^{(1)}$};
	\foreach \x in {-7,-6,-5,-4,-3,2}
	{
		\draw (\x+7+0.5,-4.5) node {$\bullet$};
	}
	
	\draw[fill=black!32!white,draw=none] (-0.5,-6) rectangle (14.5,-7);
	\draw[very thin,color=gray] (-0.5,-6) grid (14.5,-7);
	\draw[thick,color=black] (7,-5.6) -- (7,-7.4);
	\draw[thick,color=black,dotted] (6,-5.6) -- (6,-7.4);
	\draw (-1,-6.5) node {$\dots$};
	\draw (15,-6.5) node {$\dots$};
	\draw (-3,-6.5) node {$M^{(2)}$};
	\foreach \x in {-7,-6,-5,-4,-2,0}
	{
		\draw (\x+7+0.5,-6.5) node {$\bullet$};
	}
	\end{tikzpicture}
	\caption{Construction of the $3$-quotient and the characteristic vector (Example~\ref{ex:quotient}).}
	\label{fig:quotient}
\end{figure}

\subsubsection{\texorpdfstring{From a characteristic vector to its $p$-core and vice versa}{From a characteristic vector to its p-core and vice versa}}

Fix a positive integer~$p$ and a partition~$\lambda$. Denote its characteristic vector by~$c_{\lambda}$ and let~$\emptyset$ be the unique partition of size~$0$ so that~$M_{\emptyset}$ equals the set of negative integers. We define the Maya diagrams $\widebar{M}^{(0)},\widebar{M}^{(1)},\dots,\widebar{M}^{(p-1)}$ by
\begin{equation}\label{eq:DefinitionCoreModularDecomposition}
	\widebar{M}^{(i)}
		= M_{\emptyset} + c_i
\end{equation}
for each~$i$, i.e., they all are equivalent to~$M_{\emptyset}=\widehat{M}_{\emptyset}$ and the charge of~$\widebar{M}^{(i)}$ is~$c_i$. Next, set~$\widebar{M}$ as the Maya diagram such that 
\begin{equation}\label{eq:DefinitionMBar}
	\widebar{M}
		= \bigcup\limits_{i=0}^{p-1} \{ pm+i \mid m\in \widebar{M}^{(i)} \}.
\end{equation}
Then, the $p$-core~$\bar{\lambda}$ of~$\lambda$ is defined as the partition such that~$M_{\bar{\lambda}}$ and~$\widebar{M}$ are equivalent Maya diagrams. That is, $\widehat{M}_{\bar{\lambda}}=\widebar{M}$ because the assumption $\sum_i c_i=0$ implies that~$\widebar{M}$ has as many filled boxes to the right of the vertical line as empty boxes to the left of it.

Again we can reverse the process. For a given $p$-core~$\bar{\lambda}$, consider the Maya diagram~$\widehat{M}_{\bar{\lambda}}$ and take its $p$-modular decomposition to obtain the Maya diagrams $\widebar{M}^{(0)},\widebar{M}^{(1)},\dots,\widebar{M}^{(p-1)}$. Then, the~$i^\textrm{th}$ element in the characteristic vector is the integer~$c_i$ such that $\widebar{M}^{(i)}-c_i= M_{\emptyset}$.

\begin{example}\label{ex:core}
In our running example we have that~$c_{\lambda}=(2,-1,-1)$. In Figure~\ref{fig:core} we display the Maya diagrams $\widebar{M}^{(0)},\widebar{M}^{(1)},\widebar{M}^{(2)}$. From these Maya diagrams we construct~$\widebar{M}$ and find that the $3$-core of~$\lambda$ is given by~$\bar{\lambda}=(4,2)$.
\end{example}

Let~$\mathbb{Y}$ be the set of all partitions and denote by~$\widebar{\mathbb{Y}}_p$ the subset of~$p$-cores, i.e., $\lambda\in\widebar{\mathbb{Y}}_p$ if and only if $\lambda=\bar{\lambda}$, or equivalently, $\widebar{\mathbb{Y}}_p$ is the set of all partitions with empty $p$-quotient. The set~$\widebar{\mathbb{Y}}_2$ is given by all partitions of the form $(k,k-1,\dots,2,1)$ with~$k\in\mathbb{Z}_{\geq0}$, and the partitions in~$\widebar{\mathbb{Y}}_3$ precisely label the generalized Okamoto polynomials, see Remark~16 in~\cite{Bonneux_Dunning_Stevens}. There is a lot of ongoing research about counting the number of specific types of core partitions, see for example~\cite{Nath} and the references therein, or~\cite{Ayyer_Sinha} where the concept of characteristic vectors is used to determine the asymptotics for the number of~$p$-core partitions of size~$n$.

\begin{remark}\label{rem:SizeCore}
For any~$p$, the size of the $p$-core can directly be computed from its characteristic vector~\cite{Garvan_Kim_Stanton}: if $c_{\lambda}=(c_0,c_1,\dots,c_{p-1})$, then the $p$-core~$\bar{\lambda}$ has size
\begin{equation}\label{eq:SizeCore}
	|\bar{\lambda}|
		= \frac{p}{2} \sum_{j=0}^{p-1} c_j^2 + \sum_{j=1}^{p-1} j c_j
\end{equation}	
which is integer valued because the condition~$\sum_i c_i=0$ implies that the number of odd elements in~$c_{\lambda}$ is even, and therefore the first sum in~\eqref{eq:SizeCore} is even. A proof of~\eqref{eq:SizeCore} is given in Section~\ref{sec:SizeCore}. 
\end{remark}

\begin{figure}[t]
	\centering
	\begin{tikzpicture}[scale=0.5]
	\draw[fill=black!08!white,draw=none] (-0.5,-2) rectangle (14.5,-3);
	\draw[very thin,color=gray] (-0.5,-2) grid (14.5,-3);
	\draw[thick,color=black] (7,-1.6) -- (7,-3.4);
	\draw[thick,color=black,dotted] (9,-1.6) -- (9,-3.4);
	\draw (-1,-2.5) node {$\dots$};
	\draw (15,-2.5) node {$\dots$};
	\draw (-3,-2.5) node {$\widebar{M}^{(0)}$};
	\foreach \x in {-7,-6,-5,-4,-3,-2,-1,0,1}
	{
		\draw (\x+7+0.5,-2.5) node {$\bullet$};
	}
	
	\draw[fill=black!20!white,draw=none] (-0.5,-4) rectangle (14.5,-5);
	\draw[very thin,color=gray] (-0.5,-4) grid (14.5,-5);
	\draw[thick,color=black] (7,-3.6) -- (7,-5.4);
	\draw[thick,color=black,dotted] (6,-3.6) -- (6,-5.4);
	\draw (-1,-4.5) node {$\dots$};
	\draw (15,-4.5) node {$\dots$};
	\draw (-3,-4.5) node {$\widebar{M}^{(1)}$};
	\foreach \x in {-7,-6,-5,-4,-3,-2}
	{
		\draw (\x+7+0.5,-4.5) node {$\bullet$};
	}
	
	\draw[fill=black!32!white,draw=none] (-0.5,-6) rectangle (14.5,-7);
	\draw[very thin,color=gray] (-0.5,-6) grid (14.5,-7);
	\draw[thick,color=black] (7,-5.6) -- (7,-7.4);
	\draw[thick,color=black,dotted] (6,-5.6) -- (6,-7.4);
	\draw (-1,-6.5) node {$\dots$};
	\draw (15,-6.5) node {$\dots$};
	\draw (-3,-6.5) node {$\widebar{M}^{(2)}$};
	\foreach \x in {-7,-6,-5,-4,-3,-2}
	{
		\draw (\x+7+0.5,-6.5) node {$\bullet$};
	}
	
	\foreach \x in {-9,-6,-3,0,3,6,9}
	{
		\draw[fill=black!08!white,draw=none] (7+\x,-9) rectangle (7+1+\x,-10);
	}
	\foreach \x in {-10,-7,-4,-1,2,5,8}
	{
		\draw[fill=black!32!white,draw=none] (7+\x,-9) rectangle (7+1+\x,-10);
	}
	\foreach \x in {-11,-8,-5,-2,1,4,7,10}
	{
		\draw[fill=black!20!white,draw=none] (7+\x,-9) rectangle (7+1+\x,-10);
	}
	\draw[fill=black!32!white,draw=none] (7+11,-9) rectangle (7+0.5+11,-10);
	\draw[fill=black!08!white,draw=none] (7-12+0.5,-9) rectangle (7-11,-10);
	\draw[very thin,color=gray] (-4.5,-9) grid (18.5,-10);
	\draw[thick,color=black] (7,-8.6) -- (7,-10.4);
	\draw (-5,-9.5) node {$\dots$};
	\draw (19,-9.5) node {$\dots$};
	\draw (-7,-9.5) node {$\widebar{M}$};
	\foreach \x in {-11,-10,-9,-8,-7,-6,-5,-4,-3,0,3}
	{
		\draw (\x+0.5+7,-9.5) node {$\bullet$};
	}
	\end{tikzpicture}
	\caption{Construction of the $3$-core (Example~\ref{ex:core}).}
	\label{fig:core}
\end{figure}
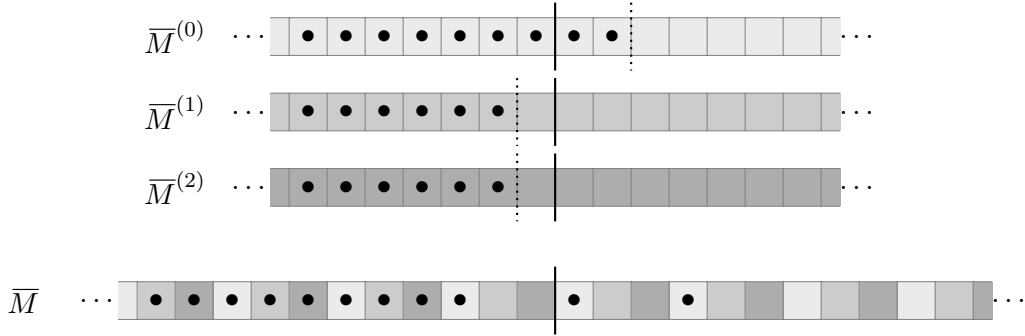

\subsubsection{Border strips and hook lengths}\label{sec:CoreViaBorderStrips}

There is another process~\cite[I.1~Ex.~8(c)]{MacDonald} to obtain the $p$-core of a partition $\lambda=(\lambda_1,\lambda_2,\dots,\lambda_r)$, namely via its \emph{Young diagram}
\begin{equation}\label{eq:YoungDiagram}
	D_\lambda
		=\{(i,j)\in\mathbb{N}^2 \mid 1\le i\le r ,\ 1\le j\le\lambda_i\}
\end{equation}
which consists of~$r$ rows, and the~$i^\textrm{th}$ row has~$\lambda_i$ boxes. The points~$(i,j)\in D_\lambda$ are depicted as unit squares with matrix-style coordinates, and so the size of the partition equals the number of boxes. To determine the $p$-core from the Young diagram, we remove strips of~$p$~connected boxes that do not contain a $2\times2$-square, such that after each removal, the remaining diagram is still associated to a partition. Such strips are called \emph{border strips} of size~$p$. At some point, we end up with a partition, possibly the empty partition, such that we cannot remove a border strip of size~$p$ any more. This partition is the $p$-core, and it is well-known that although in each step there are several possible options for deleting a border strip, the process will always end with the same partition. 

The partitions in the $p$-quotient can be read off from the Young diagram too~\cite[I.1~Ex.~8(d)]{MacDonald}. For each box in the diagram, its \emph{hook length} is defined as the number of boxes to the right and below it, hereby counting the box itself just once. If we then color all boxes whose hook lengths are a multiple of~$p$, then the Young diagrams obtained by merging all colored boxes that are in the same row or in the same column, determine the partitions in the $p$-quotient. For a clarifying example, see Figure~\ref{fig:rimhooks}. Note that this approach does not fix the ordering of the partitions.

\begin{example}\label{ex:rimhooks}
There are 4 border strips of size~$3$: two rectangles and two hooks. The left part of Figure~\ref{fig:rimhooks} visualizes a process to remove such border strips from~$\lambda=(8,8,6,6,2,2,1)$: every shade of gray matches a border strip that we remove. The remaining white boxes indicate the $3$-core, i.e., the partition~$\bar{\lambda}=(4,2)$ from which we can no longer remove any border strips of size~$3$. The right part of Figure~\ref{fig:rimhooks} indicates the hook lengths in the Young diagram and colors those that are a multiple of~$3$. We recognize the partitions $(4)$, $(1,1)$ and $(2,1)$ which are the entries of the $3$-quotient.
\end{example}

\begin{figure}[t]
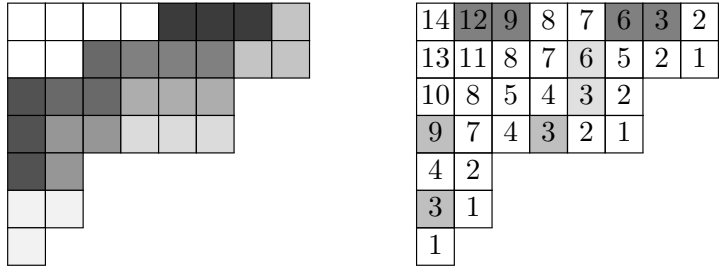

	\centering
	\ytableausetup{boxsize=1.25em}
	\ydiagram
	[*(black!05!white)]{8+0,8+0,6+0,6+0,2+0,0+2,0+1}
	*[*(black!14!white)]{8+0,8+0,6+0,3+3,2+0}
	*[*(black!23!white)]{7+1,6+2,6+0,3+0,2+0}
	*[*(black!32!white)]{7+0,6+0,3+3,3+0,2+0}
	*[*(black!41!white)]{7+0,6+0,3+0,1+2,1+1}
	*[*(black!50!white)]{7+0,3+3,3+0,1+0,1+0}
	*[*(black!59!white)]{7+0,2+1,1+2,1+0,1+0}
	*[*(black!68!white)]{7+0,2+0,0+1,0+1,0+1}
	*[*(black!77!white)]{4+3,2+0}
	*[*(white)]{4,2}
	\qquad
	\ytableausetup{boxsize=1.25em,mathmode}
	\begin{ytableau}
		14 & *(gray)12 & *(gray)9 & 8 & 7 & *(gray)6 & *(gray)3 & 2 \\
		13 & 11 & 8 & 7 & *(black!12!white)6 & 5 & 2 & 1 \\
		10 &  8 & 5 & 4 & *(black!12!white)3 & 2 \\
		*(lightgray)9 &  7 & 4 & *(lightgray)3 & 2 & 1 \\
		4 &  2 \\
		*(lightgray)3 &  1 \\
		1 
	\end{ytableau}
	\caption{The $3$-core and the partitions in the $3$-quotient obtained from the Young diagram (Example~\ref{ex:rimhooks}).}
	\label{fig:rimhooks}
\end{figure}

We end this section with the following well-known result, see for example~\cite{Garvan_Kim_Stanton,James_Kerber}. It says that each partition can be uniquely identified with its $p$-core and $p$-quotient. It immediately follows by the above explained constructions. 

\begin{lemma}\label{lem:Bijection}
Fix a positive integer~$p$. The map 
\begin{equation}\label{eq:Bijection}
	\Phi: \mathbb{Y} \to \widebar{\mathbb{Y}}_p \times \mathbb{Y}^p: \lambda \mapsto (\bar{\lambda} , \mu)
\end{equation}
where~$\bar{\lambda}$ is the $p$-core of~$\lambda$ and~$\mu$ is the $p$-quotient of~$\lambda$, is a bijection.
\end{lemma}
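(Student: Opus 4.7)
The plan is to exhibit an explicit inverse of $\Phi$ coming from the constructions in Section~\ref{sec:Partitions} and check that both compositions are the identity. The map $\Phi$ is well-defined by construction: given $\lambda$, the charge-zero condition uniquely determines $\widehat{M}_\lambda$, the $p$-modular decomposition \eqref{eq:ModularDecomposition} yields $M^{(0)},\dots,M^{(p-1)}$, and these fix $c_\lambda$ and $\mu^{(i)}$ by \eqref{eq:DefinitionEntriesCharacteristicVector}; the $p$-core $\bar{\lambda}$ is then built from $c_\lambda$ via \eqref{eq:DefinitionCoreModularDecomposition}--\eqref{eq:DefinitionMBar}. Since the procedure forces $\sum_i c_i=0$, the image does lie in $\widebar{\mathbb{Y}}_p\times\mathbb{Y}^p$.

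Next I would define $\Psi:\widebar{\mathbb{Y}}_p\times\mathbb{Y}^p\to\mathbb{Y}$ by reversing the construction. For an input $(\bar{\lambda},\mu)$, first read off the characteristic vector $c=(c_0,\dots,c_{p-1})$ of~$\bar{\lambda}$ as in the paragraph following~\eqref{eq:DefinitionMBar}; then set $M^{(i)}=\widehat{M}_{\mu^{(i)}}+c_i$ and assemble $M$ as in~\eqref{eq:DefinitionM}. Because $\sum_ic_i=0$, the diagram $M$ has charge zero, so $M=\widehat{M}_\lambda$ for a unique partition $\lambda\in\mathbb{Y}$, which we take as $\Psi(\bar{\lambda},\mu)$.

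The main (and really the only) substantive point is a consistency observation: the characteristic vector of a partition $\lambda$ coincides with the characteristic vector of its $p$-core $\bar{\lambda}$. This is where I expect the bookkeeping to require the most care. It follows directly from the definitions: $\bar{\lambda}$ was built so that its $p$-modular Maya diagrams are exactly the shifts $M_{\emptyset}+c_i$, and hence the charges extracted from $\widehat{M}_{\bar{\lambda}}$ in the reverse procedure are precisely $(c_0,\dots,c_{p-1})=c_\lambda$. With this in hand, both compositions are immediate: in $\Psi\circ\Phi(\lambda)$ the diagrams $\widehat{M}_{\mu^{(i)}}+c_i$ rebuild the original $M^{(i)}$ by~\eqref{eq:DefinitionEntriesCharacteristicVector}, hence $M=\widehat{M}_\lambda$ by~\eqref{eq:DefinitionM}; in $\Phi\circ\Psi(\bar{\lambda},\mu)$, the decomposition of the reconstructed diagram returns the same $\mu^{(i)}$ and $c_i$ by construction, and the consistency observation then recovers $\bar{\lambda}$ as the $p$-core of the output partition. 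This proves bijectivity.
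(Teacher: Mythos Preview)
Your proposal is correct and is precisely the approach the paper has in mind: the paper does not give a formal proof but simply states that the lemma ``immediately follows by the above explained constructions,'' and your argument is exactly the careful unpacking of those forward and reverse constructions from Section~\ref{sec:Partitions}. The consistency observation you flag (that $c_\lambda=c_{\bar{\lambda}}$) is implicit in the paper's definition of $\bar{\lambda}$ via \eqref{eq:DefinitionCoreModularDecomposition}--\eqref{eq:DefinitionMBar}, so nothing further is needed.
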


\subsection{Wronskian polynomials}\label{sec:WP}
Fix a positive integer~$p$ and consider the sequence of polynomials $(q_n)_{n=0}^\infty$ that have exponential generating function
\begin{equation}\label{eq:ExponentialGeneratingFunction}
	\sum_{n=0}^\infty q_n(x) \frac{t^n}{n!} 
		= \exp\left(tx - \frac{t^p}{p}\right).
\end{equation}
From this generating sequence we directly obtain that $q'_n(x)=n q_{n-1}(x)$ for all $n\geq1$ and hence $(q_n)_{n=0}^\infty$ is an Appell sequence~\cite{Bonneux_Hamaker_Stembridge_Stevens}. We can generate these polynomials via the recurrence relation
\begin{equation}\label{eq:RecurrenceRelation}
	q_n(x)
		= x q_{n-1}(x) - \frac{(n-1)!}{(n-p)!} q_{n-p}(x)
\end{equation}
for~$n\geq p$, along with the initial conditions~$q_n(x)=x^n$ for~$n<p$. Via induction, or using~\eqref{eq:ExponentialGeneratingFunction}, we then find that
\begin{equation}\label{eq:Symmetry}
	q_n(\omega_p \, x) 
		= \omega_p^n \, q_n(x)
\end{equation}
for all~$n\geq0$, where~$\omega_p=\exp(2\pi i /p)$ is the~$p^\textrm{th}$ root of unity. Using this property, among other things, one can show that all zeros lie on the $p$-star, i.e., the union of the semi-infinite intervals~$[0,\omega_p^j \, \infty)$ for~$j=0,1,\dots,p-1$. More precisely, if $n\equiv t \mod p$, then the zero at the origin of~$q_n$ has multiplicity~$t$ while the other zeros are equally divided over the semi-infinite intervals, i.e., each interval contains~$(n-t)/p$ simple zeros. These last zeros are symmetric, meaning that if~$z$ is a zero, then~$\omega_p^j \, z$ is a zero too for all~$j=0,1,\dots,p-1$.

The polynomial sequence~$(q_n)_{n=0}^\infty$ can be generalized to a net of polynomials~$(q_\lambda)_{\lambda\in\mathbb{Y}}$. For each partition~$\lambda$ we define 
\begin{equation}\label{eq:QLambda}
	q_\lambda 
		= \frac{\Wr[q_{n_1},q_{n_2},\dots,q_{n_{r}}]}{\Delta(n_{\lambda})}
		= \frac{\det \left(\frac{d^{i-1}}{dx^{i-1}} \, q_{n_j}\right)_{1\leq i ,j \leq r}}{\prod\limits_{i<j}(n_j-n_i)}
\end{equation}
where~$n_\lambda=(n_1,n_2,\dots,n_r)$ is the degree vector of~$\lambda$, and $\Delta(n_{\lambda})$ denotes the Vandermonde determinant of this vector. For~$\lambda=(n)$ we get $q_\lambda(x)= q_n(x)$ and hence the original sequence is included in the net. 

The polynomials~\eqref{eq:QLambda} were studied in several papers and it is known that the specific form of the exponential generating function in~\eqref{eq:ExponentialGeneratingFunction} ensures that~$q_\lambda$ has integer coefficients~\cite{Bonneux_Hamaker_Stembridge_Stevens}. Via the notion of~$p$-cores and $p$-quotients, where the positive integer~$p$ is obtained from the exponential generating function~\eqref{eq:ExponentialGeneratingFunction}, the following factorization is obtained in~\cite{Bonneux_Dunning_Stevens}: for any partition~$\lambda$ with $p$-core~$\bar{\lambda}$ and $p$-quotient~$\mu$, we have
\begin{equation}\label{eq:R}
	q_\lambda(x)
		= x^{|\bar{\lambda}|} R_{\lambda}(x^p)
\end{equation}
where~$R_\lambda$ is a monic polynomial of degree~$|\mu|$ with a non-vanishing constant coefficient. Several explicit expressions for the coefficients of~$R_{\lambda}$ are given in~\cite{Bonneux_Dunning_Stevens}. Our main result, given in Section~\ref{sec:MainResult}, states the asymptotic behavior of the polynomials~$R_\lambda$ when~$|\lambda|$ grows. We hereby identify~$\lambda$ with its $p$-quotient and $p$-core, and let the size of the $p$-core grow while the $p$-quotient stays fixed. This means that during this process, the degree of~$R_\lambda$ does not change while the multiplicity of the zero at the origin for~$q_\lambda$ increases to infinity. 

\begin{example}
In the trivial case~$p=1$ we have that $q_n(x)=(x-1)^n$ for any~$n\geq0$, which by directly computing of the Wronskian in~\eqref{eq:QLambda} implies that $q_\lambda(x)=(x-1)^{|\lambda|}$ for any partition~$\lambda$. The $1$-quotient equals the original partition while the $1$-core is the empty partition, hence $R_\lambda(x)=(x-1)^{|\lambda|}$. 
\end{example}

\begin{example}
When~$p=2$, we recover Wronskian Hermite polynomials which are intensively studied: there is a connection to rational solutions of the fourth Painlev\'e equation~\cite{Clarkson-zeros,Clarkson_GomezUllate_Grandati_Milson,Kajiwara_Ohta-PIV,Masoero_Roffelsen,Masoero_Roffelsen-2019,Noumi_Yamada,VanAssche} and they appear in the field of exceptional orthogonal polynomials~\cite{Duran-Recurrence,GomezUllate_Grandati_Milson-Hermite,GomezUllate_Kasman_Kuijlaars_Milson,Kuijlaars_Milson}. The results in~\cite{Clarkson_GomezUllate_Grandati_Milson} use $p$-modular decompositions of Maya diagrams to construct cyclic Maya diagrams, but they do not use the notions of~$p$-cores and $p$-quotients. Further research is needed to determine if these concepts help to interpret their results.
\end{example}

\begin{example}
The Yablonskii-Vorobiev polynomials~$(Q_n)_{n=0}^{\infty}$ can be represented via the Wronskian determinants
\begin{equation}\label{eq:YablonskiiVorobiev}
	Q_n
		= d_n \, \Wr[q_{2n-1},q_{2n-3},\dots,q_3,q_1]
\end{equation}
for all~$n\geq0$, where~$d_n$ is a normalization constant, and the sequence~$(q_n)_{n=0}^{\infty}$ has generating function $\sum_{n} q_n(x) \frac{t^n}{n!} = \exp\left(tx - \frac{4}{3}t^3\right)$, see~\cite{Kajiwara_Ohta}. Therefore, up to rescaling of the polynomials, they are contained in the net~$(q_\lambda)_{\lambda\in\mathbb{Y}}$ when~$p=3$, because the polynomial~$Q_n$ is linked to~$q_\lambda$ with $\lambda=(n,n-1,\dots,2,1)$. Note that these partitions are exactly the $2$-cores.
\end{example}

\section{Main result}\label{sec:MainResult}
For~$p=2$, it is shown in~\cite{Bonneux_Dunning_Stevens} that after a proper rescaling, the zeros of the polynomial~$R_\lambda$ are attracted to~$-1$ and~$1$ when the $2$-quotient is fixed and the size of the $2$-core grows to infinity, see~\eqref{eq:AsymptoticBehavior2}. We now generalize this result to arbitrary $p$ and offer an asymptotic result for the polynomial~$R_\lambda$ defined in~\eqref{eq:R}. We associate the $p$-core with its characteristic vector and let all entries simultaneously tend to infinity. 

\begin{theorem}\label{thm:AsymptoticResult}
Fix a positive integer~$p\geq1$ as well as a $p$-quotient $\mu=(\mu^{(0)},\mu^{(1)},\dots,\mu^{(p-1)})$. Let $c(k)=(c_0(k),c_1(k),\dots,c_{p-1}(k))\in\mathbb{Z}^{p}$ be a characteristic vector for all $k\in\mathbb{N}$ such that each entry satisfies
\begin{equation}\label{eq:ConditionVector}
	c_i(k)
		=a_i k + o(k)
\end{equation} 
as~$k\to + \infty$ for some~$a_i\in\mathbb{R}$. For any~$k\in\mathbb{N}$, denote by~$\lambda(k)$ the partition identified with the $p$-quotient~$\mu$ and the $p$-core associated to~$c(k)$. Then
\begin{equation}\label{eq:AsymptoticResult}
	\lim_{k\to + \infty} \frac{R_{\lambda(k)}\left((pk)^{p-1}x\right)}{(pk)^{(p-1)|\mu|}}
		= \prod_{i=0}^{p-1} (x-\alpha_i)^{|\mu^{(i)}|}
\end{equation}
uniformly for~$x$ in compact subsets of the complex plane, and where
\begin{equation}\label{eq:Alpha}
	\alpha_i
		= \prod_{\substack{j=0 \\ j\neq i}}^{p-1} (a_i-a_j)
\end{equation}
for~$i=0,1,\dots,p-1$.
\end{theorem}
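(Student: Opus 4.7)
I would proceed by a multi-variable steepest-descent analysis of the Wronskian. Combining the Wronskian definition~\eqref{eq:QLambda} with the Cauchy-integral formula $q_n(x)=\tfrac{n!}{2\pi i}\oint t^{-n-1}e^{tx-t^p/p}\,dt$ coming from~\eqref{eq:ExponentialGeneratingFunction}, one obtains the multi-contour representation
\begin{equation*}
	q_{\lambda(k)}(X)=\frac{\prod_j n_j!}{\Delta(n_{\lambda(k)})}\oint\!\cdots\!\oint\frac{\Delta(t_1,\ldots,t_r)}{\prod_j t_j^{n_j+1}}\prod_j e^{t_jX-t_j^p/p}\,\frac{dt_j}{2\pi i}.
\end{equation*}
Using the Maya-diagram/$p$-modular description of $\widehat M_{\lambda(k)}$ from Section~\ref{sec:Preliminaries}, each degree $n_j$ carries a residue $i_j\in\{0,\ldots,p-1\}$ and satisfies $n_j=pc_{i_j}(k)+\nu_j$, where the integer $\nu_j$ is determined entirely by $\mu^{(i_j)}$; in particular the number of indices with residue~$i$ equals $\ell(\mu^{(i)})$ plus an offset that is stable in $k$.

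Next I would substitute $X=(pk)^{(p-1)/p}\tilde y$ with $\tilde y^p=x$ and $t_j=(pk)^{1/p}s_j$. The exponent of the integrand then becomes, modulo subleading terms,
\begin{equation*}
	pk\sum_{j=1}^r\Bigl(s_j\tilde y-\tfrac{s_j^p}{p}-a_{i_j}\log s_j\Bigr),
\end{equation*}
whose $j$-th saddle-point equation is $s^p-\tilde y s+a_{i_j}=0$. For each residue class~$i$ I would fix an analytic branch $\sigma_i(\tilde y)$ of this saddle on compact subsets of the $x$-plane avoiding the discriminant locus, and deform each contour through its saddle. Because the saddle depends only on the residue class, the variables with residue~$i$ all cluster at $\sigma_i(\tilde y)$, so $\Delta(t)$ vanishes on each cluster; the local Gaussian expansion $s_j=\sigma_i(\tilde y)+u_j/\sqrt{pk}$ converts this vanishing into a Vandermonde in the fluctuation variables $u_j$. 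The resulting Gaussian integral, weighted by this local Vandermonde and the $\nu_j$-dependent monomial shifts, reduces to the Wronskian--Appell problem associated to the partition $\mu^{(i)}$ and contributes a polynomial factor of total degree $|\mu^{(i)}|$ in $x=\tilde y^p$.

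The $\tilde y$-dependence of the limit is then carried by the cross-cluster Vandermonde $\prod_{i<i'}\bigl(\sigma_i(\tilde y)-\sigma_{i'}(\tilde y)\bigr)^{r_ir_{i'}}$ together with the local cluster contributions, and the identification of the zero $x=\alpha_i=\prod_{j\neq i}(a_i-a_j)=P'(a_i)$ (where $P(a)=\prod_j(a-a_j)$) follows from an elementary symmetric-function computation on the saddle polynomials $s^p-\tilde y s+a_i$, which share the common leading term~$s^p$. The main obstacle is the combinatorial bookkeeping of all the prefactors---the factorial $\prod_j n_j!$, the Vandermonde denominator $\Delta(n_{\lambda(k)})$, the Gaussian normalization from the saddle expansion, and the exponentially large contribution $\exp\bigl(pk\sum_i r_i f(\sigma_i(\tilde y))\bigr)$ that plays the role of the core-size factor $X^{|\bar\lambda(k)|}$ in the factorization $q_\lambda(X)=X^{|\bar\lambda|}R_\lambda(X^p)$ via Remark~\ref{rem:SizeCore}---so that they combine exactly to produce the normalization $(pk)^{(p-1)|\mu|}$ without extraneous factors. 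Uniformity on compact subsets of the complex plane then follows from standard steepest-descent estimates.
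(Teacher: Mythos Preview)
Your approach is entirely different from the paper's, which is purely combinatorial and never touches an integral. The paper expands $R_{\lambda(k)}(x)=\sum_j r_{\lambda(k),j}\,x^{|\mu|-j}$ and uses an explicit formula for $r_{\lambda(k),j}$ as a \emph{finite} sum over $\tilde\mu<_j\mu$ whose only $k$-dependence is a ratio $H_{\text{non-$p$-fold}}(\lambda(k))/H_{\text{non-$p$-fold}}(\tilde\lambda(k))$ of non-$p$-fold hook-length products. An induction on~$|\mu|$ (Lemma~\ref{lem:HookLengths}) shows that this ratio equals $(-1)^{\htt_p(\lambda/\tilde\lambda)}(pk)^{(p-1)j}\prod_i\alpha_i^{l_i}+o(k^{(p-1)j})$; the values $\alpha_i=\prod_{j\neq i}(a_i-a_j)$ arise transparently as products of the hook-length factors $p(c_i(k)-c_j(k))+O(1)\sim pk(a_i-a_j)$, and the path-counting identity~\eqref{eq:IdentityCountingPaths} then collapses the sum over~$\tilde\mu$ to the binomial expansion of $\prod_i(x-\alpha_i)^{|\mu^{(i)}|}$.

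Your sketch has a structural gap. The number of contour variables is $r=\ell(\lambda(k))$, and this tends to infinity with~$k$ because the $p$-core grows; your assertion that ``the number of indices with residue~$i$ equals $\ell(\mu^{(i)})$ plus an offset that is stable in~$k$'' is incorrect, since the residue-$i$ block $M^{(i)}=\widehat M_{\mu^{(i)}}+c_i(k)$ has charge $c_i(k)\sim a_ik$ and therefore contributes a $k$-dependent number of entries to the degree vector. You are thus attempting steepest descent in a number of variables growing like~$k$, with an exponential saddle contribution spread over order-$k$ factors that must reproduce $X^{|\bar\lambda(k)|}$ exactly; no finite-dimensional saddle-point lemma covers this, and the ``bookkeeping'' you flag as the main obstacle is in fact the whole analytic content of the theorem. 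Finally, your mechanism for locating the zeros does not work as stated: the cross-cluster Vandermonde $\prod_{i<i'}(\sigma_i-\sigma_{i'})^{r_ir_{i'}}$ vanishes only when two saddles coincide, and from the equations $s^p-\tilde ys+a_i=0$ that would force $a_i=a_{i'}$ rather than $x=\alpha_i$, so the sketch never actually explains why the cluster associated to residue~$i$ produces the specific factor $(x-\alpha_i)^{|\mu^{(i)}|}$ instead of some other degree-$|\mu^{(i)}|$ polynomial in~$x$.
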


The proof of this result is given in Section~\ref{sec:Proof}, it generalizes the ideas in~\cite{Bonneux_Dunning_Stevens} which deals with the case $p=2$. Observe that accordingly to Remark~\ref{rem:SizeCore}, the size of the $p$-core identified with the characteristic vector~$c(k)$ grows to infinity when $k\to + \infty$, and hence the size of~$\lambda(k)$ tends to infinity as well. The rescaled zeros of the polynomial~$R_{\lambda(k)}$ tend to real values and the sizes of the partitions in the $p$-quotient indicate how many zeros are attracted to each value. For~$p=1$, the asymptotic result \eqref{eq:AsymptoticResult} trivializes as the fraction in the left-hand side is independent of~$k$ and both sides trivially equal $(x-1)^{|\lambda|}$. It is also possible to determine the asymptotic behavior for~$k\to-\infty$: when~$p$ is odd it just equals the limit~\eqref{eq:AsymptoticResult}, when~$p$ is even we replace~$k$ by~$-k$ and apply the result of Theorem~\ref{thm:AsymptoticResult} whereby we substitute~$a_i$ with~$-a_i$ for all~$i$ in~\eqref{eq:ConditionVector}.

\begin{example}\label{ex:Zeros}
Take~$p=3$, fix the $3$-quotient $\mu=((2),(2,1),(1,1))$, and define the vector $c(k)=(-2k,-k,3k)$ so that the sum of the elements in this vector equals~0 for any~$k\in\mathbb{N}$. All non-zero roots of the polynomials~$q_{\lambda(k)}$ for~$k\in\{5,7,9,11\}$ are plotted in the right part of Figure~\ref{fig:ConvergenceZeros}. The left part shows the rescaled zeros of~$R_{\lambda(k)}$ as in Theorem~\ref{thm:AsymptoticResult}. Although the convergence is very slow, we observe that the zeros on the left are attracted by~$-4$, $5$ and~$20$. 
\end{example}

\begin{figure}[t]
	\centering
	\includegraphics[height=7cm,trim=60 340 160 60,clip=true]{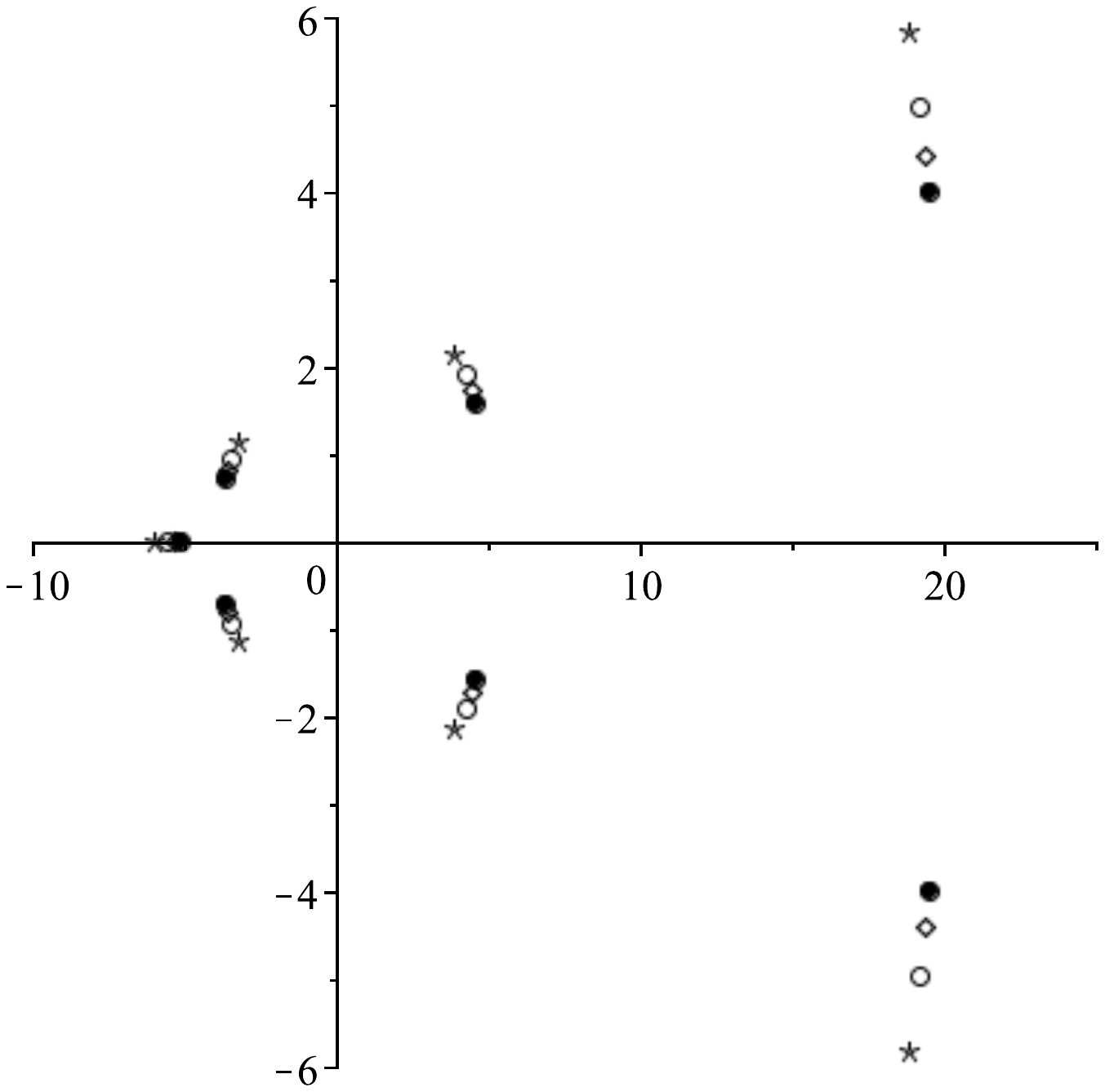}
	\qquad
	\qquad
	\includegraphics[height=7cm,trim=60 340 160 60,clip=true]{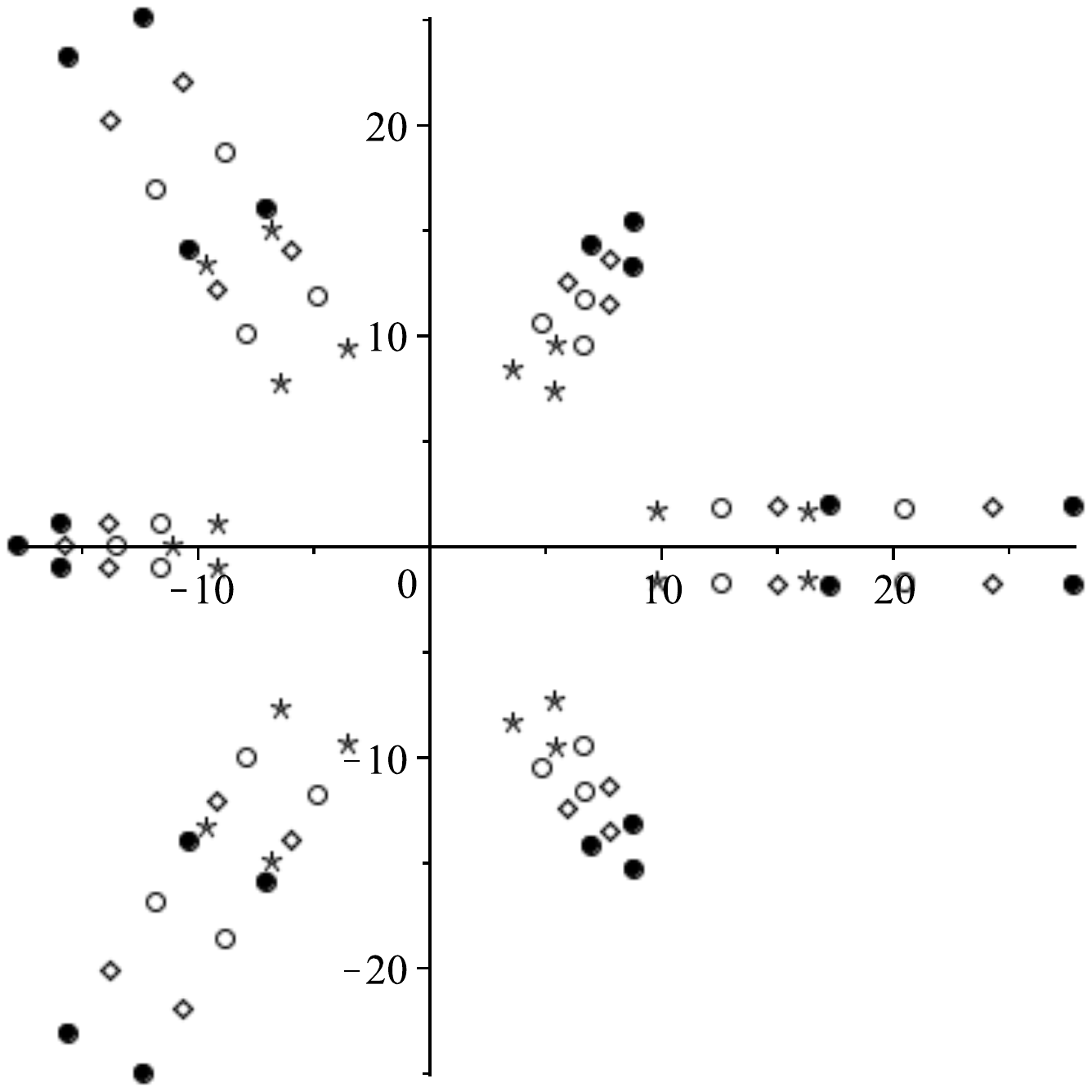}
	
	\tiny
	\begin{tabular}{| c c c c c |}
		\hline
		$k$ & $5$ & $7$ & $9$ & $11$ \\
		& $\star$ & $\circ$ & $\diamond$ & $\bullet$ \\
		\hline
	\end{tabular}
	\caption{The zeros of the polynomials~$q_{\lambda(k)}$ and~$R_{\lambda(k)}$ (Example~\ref{ex:Zeros}).}
	\label{fig:ConvergenceZeros}
\end{figure}

\begin{example}
Take~$p=2$, $a\in\mathbb{Z}$ and set~$c(k)=(-ak,ak)$. Then the asymptotic result in~\eqref{eq:AsymptoticResult} states that
\begin{equation}\label{eq:Remark2}
	\lim_{k\to + \infty} \frac{R_{\lambda(k)}\left(2akx\right)}{(2ak)^{|\mu|}}
		= (x+2a)^{|\mu^{(0)}|}(x-2a)^{|\mu^{(1)}|}.
\end{equation}	
When~$a\geq0$, the characteristic vector~$c(k)$ is associated with the $2$-core $(2ak,2ak-1,\dots,2,1)$. Similarly, when~$a<0$, then~$c(k)$ corresponds to the $2$-core $(2ak-1,2ak-2,\dots,2,1)$. Hence, if we rescale~\eqref{eq:Remark2} such that the zeros are attracted by~$-1$ and~$1$, we recover the asymptotic result in~\cite{Bonneux_Dunning_Stevens} as described in~\eqref{eq:AsymptoticBehavior2}.
\end{example}

\begin{remark}
In~\cite{Bonneux_Dunning_Stevens}, where the case~$p=2$ is studied, it is mentioned that the location of the zeros of~$q_\lambda$ reveals the $2$-quotient of~$\lambda$, that is, from Figure~9 in~\cite{Bonneux_Dunning_Stevens} we recognize the associated $2$-quotient. For arbitrary~$p$, a similar observation can be made: we somehow can determine the $3$-quotient from the exact location of the zeros in the plots of Figure~\ref{fig:ConvergenceZeros}, but more research is needed to obtain precise statements.
\end{remark}

\section{Proof of the main result}\label{sec:Proof}
In this section we prove Theorem~\ref{thm:AsymptoticResult}. We first explain the general idea and then provide all the details in the upcoming subsections. The structure of the proof is similar to the proof in~\cite{Bonneux_Dunning_Stevens} which covers the case~$p=2$, however, the concept of characteristic vectors, which is not used in~\cite{Bonneux_Dunning_Stevens}, simplifies several aspects. Throughout this section, let~$p$ be a fixed positive integer.

\paragraph{Step 0: setting the scene.}
Fix some $p$-quotient $\mu=(\mu^{(0)},\mu^{(1)},\dots,\mu^{(p-1)})\in\mathbb{Y}^{p}$. Let $c(k)=(c_0(k),c_1(k),\dots,c_{p-1}(k))\in\mathbb{Z}^{p}$ be a vector such that $\sum_i c_i(k)=0$ for any~$k\in\mathbb{N}$ and assume that the asymptotic behavior for each entry is given by~\eqref{eq:ConditionVector}. For any~$k\in\mathbb{N}$, denote~$\lambda(k)$ as the unique partition identified with the $p$-quotient~$\mu$ and the $p$-core associated to~$c(k)$, see Lemma~\ref{lem:Bijection}. For each~$k$, the polynomial~$R_{\lambda(k)}$ defined in~\eqref{eq:R} has degree~$|\mu|$, so we can write
\begin{equation}\label{eq:ExpansionR}
	R_{\lambda(k)}(x)
		= \sum_{j=0}^{|\mu|} r_{\lambda(k),j} \, x^{|\mu|-j}.
\end{equation}
Several expressions for the above coefficients are given in~\cite[Section~7]{Bonneux_Dunning_Stevens}, we will use the expression in terms of hook lengths. That is,
\begin{equation}\label{eq:Coefficient}
	r_{\lambda(k),j}
		= (-1)^j \binom{|\mu|}{j} 
		\sum_{\tilde{\mu}<_j \mu} (-1)^{\htt_p(\lambda/\tilde{\lambda})}
		\frac{F^{(p)}_{\tilde{\mu}} \,F^{(p)}_{\mu/\tilde{\mu}}}{F^{(p)}_{\mu}} 
		\,
		\frac{H_{\text{non-$p$-fold}}(\lambda(k))}{H_{\text{non-$p$-fold}}(\tilde{\lambda}(k))}
\end{equation}
for all~$j=0,1\dots,|\mu|$, and where the numerator and the denominator in the last fraction are products over the hook lengths in the partition that are not a multiple of~$p$, see Section~\ref{sec:HookLengths}. Note that this fraction is the only term in the expression that depends on~$k$. The other combinatorial concepts as well as all of the notation used in~\eqref{eq:Coefficient} are explained in Section~\ref{sec:CoresAndQuotients}.

\paragraph{Step 1: expressing the hook lengths.} 
In the first step we determine the asymptotic behavior as~$k$ tends to infinity for the terms in the last fraction in~\eqref{eq:Coefficient}, see Lemma~\ref{lem:HookLengths}. This is the most technical part of the proof, all details are clustered in Section~\ref{sec:HookLengths}.

\paragraph{Step 2: the behavior of the coefficients.}
We apply Lemma~\ref{lem:HookLengths} to rewrite~\eqref{eq:Coefficient} and show that
\begin{equation}\label{eq:CoefficientLeadingBehavior}
	(pk)^{-(p-1)j} r_{\lambda(k),j}
		= L_j + o(1)
\end{equation}
as~$k\to + \infty$, and where 
\begin{equation}\label{eq:L}
	L_j
		=
		(-1)^{j}
		\sum_{l_0+\dots+l_{p-1}=j} 
		\binom{|\mu^{(0)}|}{l_0}\binom{|\mu^{(1)}|}{l_1} \dots \binom{|\mu^{(p-1)}|}{l_{p-1}} \,
		\alpha_0^{l_0} \,\alpha_1^{l_1} \cdots \alpha_{p-1}^{l_{p-1}}	
\end{equation}
with~$\alpha_i$ defined in~\eqref{eq:Alpha}. In the above expression, we sum over all $p$-tuples of non-negative integers $l_0,l_1,\dots,l_{p-1}$ such that $\sum_i l_i=j$. Note that~$L_j$ possibly vanishes. The details are given in Section~\ref{sec:BehaviorCoefficients}.

\paragraph{Step 3: deriving the asymptotic behavior.}
As a final step, we compute the limit~\eqref{eq:AsymptoticResult} in Section~\ref{sec:AsymptoticBehavior} using~\eqref{eq:ExpansionR} and~\eqref{eq:CoefficientLeadingBehavior}. A crucial ingredient to obtain the asymptotic behavior is the binomial expansion
\begin{equation}\label{eq:BinomialExpansion}
	\prod_{i=0}^{p-1} (x-\alpha_i)^{|\mu^{(i)}|}
		= \sum_{j=0}^{|\mu|} L_j \, x^{|\mu|-j}
\end{equation}
with~$L_j$ defined in~\eqref{eq:L}. 

\subsection{The Young lattice}\label{sec:CoresAndQuotients}
The set of all partitions~$\mathbb{Y}$ forms a lattice, called the \emph{Young lattice}, partially ordered by inclusion of Young diagrams, see~\eqref{eq:YoungDiagram}. That is, $\mu\leq\lambda$ if $\mu_i \leq \lambda_i$ for all possible~$i$. For any non-negative integer~$j$ we write $\mu<_j\lambda$ if $\mu \leq \lambda$ and $|\lambda|-|\mu|=j$. If~$\mu<_j\lambda$, then a path from~$\mu$ to~$\lambda$ is a finite sequence of partitions $(\gamma^{0},\gamma^{1},\dots,\gamma^{j})$ such that $\mu =\gamma^{0}<_1\gamma^{1}<_1\dots<_1\gamma^{j-1}<_1\gamma^{j}=\lambda$.

For any two partitions~$\mu$ and~$\lambda$ with~$\mu\leq\lambda$, the difference of the Young diagrams denoted by~$D_{\lambda/\mu} = D_\lambda \setminus D_\mu$ is called a \emph{skew diagram} of shape~$\lambda/\mu$. A \emph{border strip} is a connected skew diagram~$\lambda/\mu$ that does not contain any $2 \times 2$ squares. Its \emph{height}, denoted $\htt(\lambda/\mu)$, is one less than the number of rows it occupies. If~$\mu$ is obtained from~$\lambda$ by removing several border strips of size~$p$ consecutively, then $\htt_p(\lambda/\mu)$ denotes the sum of all heights. Although~$\mu$ probably can be obtained in multiple ways from~$\lambda$, the parity of the number~$\htt_p(\lambda/\mu)$ is always the same.

The unique partition of size~$0$ is~$\emptyset$, and the integer~$F_{\lambda}$ denotes the number of paths in the Young lattice from the empty partition~$\emptyset$ to~$\lambda$. Similarly, the number~$F_{\lambda/\mu}$ is the number of paths from~$\mu$ to~$\lambda$. Formulated differently, $F_{\lambda/\mu}$ denotes the number of \textit{standard Young tableaux} of the skew diagram~$\lambda/\mu$. Subsequently, for any partition~$\lambda$ and any non-negative integer~$j$, we have
\begin{equation}\label{eq:IdentityCountingPaths}
	\sum_{\mu <_j \lambda} F_{\mu} \,F_{\lambda/\mu}
		= F_{\lambda}
\end{equation}
where we sum over all partitions~$\mu$ such that $\mu <_j \lambda$. Observe that both sides count the number of paths from~$\emptyset$ to~$\lambda$, i.e., each summand in the left-hand side counts the number of paths that passes through~$\mu$. 

The set of all $p$-quotients forms the product lattice~$\mathbb{Y}^p$. For any $\mu,\tilde{\mu}\in\mathbb{Y}^p$ we say that $\tilde{\mu}\leq \mu$ if $\tilde{\mu}^{(i)} \leq \mu^{(i)}$ for all $i$. The \textit{size} is defined as $|\mu|=\sum_{i} |\mu^{(i)}|$ and for any non-negative integer~$j$ we write $\tilde{\mu} <_j \mu$ if $\tilde{\mu} \leq \mu$ and $\lvert \tilde{\mu} \rvert +j =\lvert \mu \rvert$. A path from~$\tilde{\mu}$ to~$\mu$ is a finite sequence of $p$-tuples of partitions $(\gamma^{0},\gamma^{1},\dots,\gamma^{j})$ such that $\tilde{\mu} =\gamma^{0}<_1\gamma^{1}<_1\dots<_1\gamma^{j-1}<_1\gamma^{j}=\mu$. The number of lattice paths from~$\tilde{\mu}$ to~$\mu$ is denoted by~$F_{\mu/\tilde{\mu}}^{(p)}$ and obviously equals
\begin{equation}\label{eq:Fp}
	F_{\mu/\tilde{\mu}}^{(p)} 
		= \binom{\lvert \mu \rvert - \lvert \tilde{\mu} \rvert}{\lvert \mu^{(0)} \rvert - \lvert \tilde{\mu}^{(0)} \rvert, \lvert \mu^{(1)} \rvert - \lvert \tilde{\mu}^{(1)} \rvert, \dots, \lvert \mu^{(p-1)} \rvert - \lvert \tilde{\mu}^{(p-1)} \rvert} \prod_{i=0}^{p-1} F_{\mu^{(i)}/\tilde{\mu}^{(i)}}.
\end{equation}
We write~$F_{\mu}^{(p)}$ instead of~$F_{\mu/\emptyset}^{(p)}$ when $\tilde{\mu}=\emptyset$ is the tuple of empty partitions. 

An essential link between the notion of border strips and the $p$-quotients is that for any two partitions~$\lambda$ and~$\tilde{\lambda}$ we have that $\lambda/\tilde{\lambda}$ is a border strip of size~$p$ if and only if the $p$-quotients~$\tilde{\mu}$ and~$\mu$ satisfy  $\tilde{\mu} <_1 \mu$. Hence, the process of removing border strips from a partition to obtain the $p$-core can be described via the $p$-quotient: for each removal the size of the $p$-quotient decreases by 1. This then implies that
\begin{equation}\label{eq:SizeLambda}
	|\lambda| 
		= |\bar{\lambda}|+ p \, |\mu|
\end{equation}
where~$\bar{\lambda}$ denotes the $p$-core of~$\lambda$. More details can be found in~\cite{Brunat_Nath,Garvan_Kim_Stanton,MacDonald}. We end this section with the following result which we need in the proof of Lemma~\ref{lem:NumberOfFactors}. 

\begin{lemma}\label{lem:HeightMayaDiagram}
Take two partitions~$\tilde{\lambda}$ and~$\lambda$ such that~$\lambda/\tilde{\lambda}$ is a border strip of size~$p$. Then the Maya diagrams~$\widehat{M}_\lambda$ and~$\widehat{M}_{\tilde{\lambda}}$ differ in two boxes: $\widehat{M}_\lambda$ is obtained from~$\widehat{M}_{\tilde{\lambda}}$ by moving a bullet~$p$ steps to the right hereby passing $\htt(\lambda/\tilde{\lambda})$ other bullets.
\end{lemma}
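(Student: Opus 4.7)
The plan is to translate ``adding a border strip of size $p$'' into an explicit change of the Maya diagram, using the degree vector as a bridge.

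First I would extract the geometry of the border strip. Write $\lambda/\tilde\lambda$ as occupying rows $i_0, i_0+1, \ldots, i_0+h$ with $k_s \geq 1$ new boxes in row $i_0+s$, so $\sum_{s=0}^h k_s = p$ and $h=\htt(\lambda/\tilde\lambda)$. The two defining conditions (connected, no $2\times 2$ square) force the leftmost box of the strip in row $i_0+s-1$ to sit directly above the rightmost one in row $i_0+s$, giving $\tilde\lambda_{i_0+s-1} - \tilde\lambda_{i_0+s} = k_s-1$ for $s=1,\ldots,h$. Converting via $\tilde n_i = \tilde\lambda_i + r - i$ (padding $\tilde\lambda$ with zeros if necessary so that all indices make sense) yields the clean chain identity $\tilde n_{i_0+s-1} = \tilde n_{i_0+s}+k_s$.

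Second I would read off the degree vector of $\lambda$: since $n_{i_0+s} = \tilde n_{i_0+s}+k_s$, the chain identity gives $n_{i_0+s} = \tilde n_{i_0+s-1}$ for $s\geq 1$ and $n_{i_0} = \tilde n_{i_0}+k_0$. Viewed as sets of bullet positions, the only net change is that $\tilde n_{i_0+h}$ disappears and $\tilde n_{i_0}+k_0 = \tilde n_{i_0+h}+p$ appears. So in the Maya diagram a single bullet moves $p$ steps to the right, passing over the bullets at $\tilde n_{i_0+h-1}, \tilde n_{i_0+h-2}, \ldots, \tilde n_{i_0}$---exactly $h$ of them.

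Third I would rule out any other bullet lying in the open interval $(\tilde n_{i_0+h}, \tilde n_{i_0+h}+p)$. Indices $j>i_0+h$ satisfy $\tilde n_j < \tilde n_{i_0+h}$ and are outside. For $j=i_0-1$, the partition requirement $\tilde\lambda_{i_0-1}\geq\lambda_{i_0} = \tilde\lambda_{i_0}+k_0$ translates to $\tilde n_{i_0-1}\geq\tilde n_{i_0+h}+p+1$, so nothing intrudes from above, and in particular the target position $\tilde n_{i_0+h}+p$ is vacant. Since a single-bullet move preserves the charge of a Maya diagram (a one-line case analysis on whether the bullet crosses the vertical line), the description carries over to the charge-zero representatives, yielding the claim for $\widehat{M}_\lambda$ and $\widehat{M}_{\tilde\lambda}$. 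The main step to get right is the first one---pinning down the rigid shape imposed on a border strip by the ``no $2 \times 2$'' condition---which is a standard but slightly finicky observation about rim hooks; everything else is direct bookkeeping on the degree vector.
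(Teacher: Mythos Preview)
Your proof is correct and takes a genuinely different route from the paper. The paper's argument invokes the structural correspondence between border strips of size $p$ and the relation $\tilde{\mu} <_1 \mu$ on $p$-quotients (stated just before the lemma): since $\lambda$ and $\tilde{\lambda}$ share the same $p$-core and their $p$-quotients differ by a single box in one component, reconstructing $\widehat{M}_\lambda$ and $\widehat{M}_{\tilde{\lambda}}$ from their core and quotients via~\eqref{eq:DefinitionM} immediately shows that a single bullet moves $p$ steps to the right. The height identification is then handled by a sorting argument on the degree vector: if the moved bullet passes $d$ others, the re-sorted degree vector changes in $d+1$ coordinates, so the partitions differ in $d+1$ rows and the height equals $d$. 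Your argument is more elementary and fully self-contained: you extract the rigid staircase shape imposed on the border strip by the connectedness and no-$2\times 2$ conditions, then track the degree vector entry by entry, never touching the $p$-quotient machinery. The paper's proof is shorter because it leans on the quotient correspondence already in place; yours would stand on its own outside this paper, and makes the exact identity of the $h$ passed bullets explicit, at the price of the row-by-row bookkeeping you flag.
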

\begin{proof}
The partitions~$\tilde{\lambda}$ and~$\lambda$ have the same $p$-core as they differ by a border strip of size~$p$, moreover, we have that $\tilde{\mu}<_1 \mu$ where~$\tilde{\mu}$ (respectively~$\mu$) denotes the $p$-quotient of~$\tilde{\lambda}$ (respectively~$\lambda$). Hence, if we construct~$\widehat{M}_\lambda$ and~$\widehat{M}_{\tilde{\lambda}}$ from their $p$-core and $p$-quotient, we get that they only differ in two boxes: $\widehat{M}_\lambda$ is obtained from~$\widehat{M}_{\tilde{\lambda}}$ by moving a bullet~$p$ steps to the right. Let~$d$ be the number of bullets that it passes by this movement. We now argue that $\htt(\lambda/\tilde{\lambda})=d$.
	
Assume first that the lengths of~$\tilde{\lambda}$ and~$\lambda$ are equal. As only 1 bullet is moved in the associated Maya diagrams, the degree vectors~$n_{\tilde{\lambda}}$ and~$n_\lambda$ considered as sets only differ by 1 element, but the entries in the vectors themselves differ at~$d+1$ places. Hence, $d+1$~entries of the partitions~$\lambda$ and~$\tilde{\lambda}$ are different, so the border strip $\lambda/\tilde{\lambda}$ occupies $d+1$~rows. Accordingly to the definition, its height therefore equals~$d$. If the length of~$\tilde{\lambda}$ is smaller than the length of~$\lambda$, then a similar but slightly more technical argument can be given. This ends the proof.
\end{proof}

\begin{figure}[t]
	\centering
	\begin{tikzpicture}[scale=0.5]
	\draw (-10,-0.75) node 
	{
		\ytableausetup{boxsize=1.25em}
		\ydiagram
		[*(white)]{4,4,4,1,1}
		*[*(gray)]{4+2,4+1,4+0,1+0,1+0}
	};
	
	\draw[very thin,color=gray] (-2.5,1) grid (13.5,0);
	\draw[thick,color=black] (6,1.4) -- (6,-0.4);
	\draw (-4.5,0.5) node {$\widehat{M}_{\tilde{\lambda}}$};
	\draw (-3,0.5) node {$\dots$};
	\draw (14,0.5) node {$\dots$};
	\foreach \x in {0,1,2,4,5,9,11}
	{
		\draw (\x+0.5-2,0.5) node {$\bullet$};
	}
	\draw[gray] (10+0.5-2,0.5) node {$\bullet$};
	
	\draw[very thin,color=gray] (-2.5,-2) grid (13.5,-3);
	\draw[thick,color=black] (6,-1.6) -- (6,-3.4);
	\draw (-4.5,-2.5) node {$\widehat{M}_{\lambda}$};
	\draw (-3,-2.5) node {$\dots$};
	\draw (14,-2.5) node {$\dots$};
	\foreach \x in {0,1,2,4,5,9,11}
	{
		\draw (\x+0.5-2,-2.5) node {$\bullet$};
	}
	\draw[gray] (13+0.5-2,-2.5) node {$\bullet$};
	
	\end{tikzpicture}
	\caption{Two partitions that differ by a border strip (Example~\ref{ex:AddRimHook}).}
	\label{fig:AddRimHook}
\end{figure}
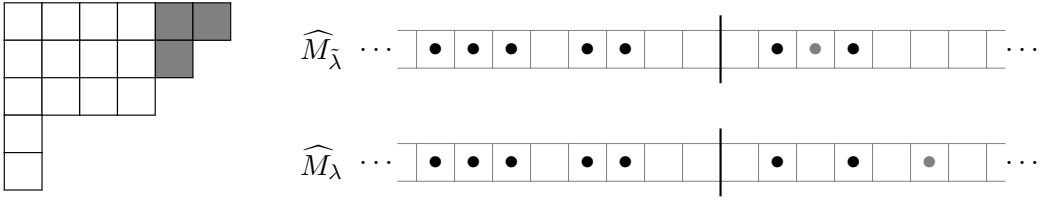

\begin{example}\label{ex:AddRimHook}
We illustrate Lemma~\ref{lem:HeightMayaDiagram} in Figure~\ref{fig:AddRimHook}. On the left we show the Young diagram of the partition $\lambda=(6,5,4,1,1)$ where we shaded a border strip of size~$3$. Removing this border strip yields the partition $\tilde{\lambda}=(4,4,4,1,1)$. For both partitions, the associated Maya diagrams are shown on the right. We clearly see that both diagrams only differ at 2 places, and that~$\widehat{M}_{\lambda}$ can be obtained from~$\widehat{M}_{\tilde{\lambda}}$ by replacing the gray bullet 3 steps to the right. It hereby passes 1 other bullet which equals the height of the removed border strip.
\end{example}

Section~\ref{sec:CoreViaBorderStrips} states that the $p$-core of a partition~$\lambda$ can be obtained by removing as many border strips of size~$p$ as possible from the Young diagram. According to Lemma~\ref{lem:HeightMayaDiagram}, each removal of such a border strip is equivalent to moving a bullet $p$~steps to the left hereby ending in an empty box in the Maya diagram~$M_\lambda$. Hence, another way to obtain the $p$-core of~$\lambda$ is by moving as many bullets as possible $p$~steps to the left. For each move the bullet ends up in an empty box, and it is allowed to move a bullet more than once. The obtained diagram equals the Maya diagram of the $p$-core.

\subsection{Hook lengths described via Maya diagrams}\label{sec:HookLengths}
For any partition~$\lambda$, the number~$H(\lambda)$ is the product of all hook lengths in the Young diagram of~$\lambda$. These hook lengths can be read off from the associated Maya diagram: each hook length is given by the distance between an empty box and a filled box to the right of it. We therefore have that
\begin{equation}\label{eq:HookLengthsMayaDiagram}
	H(\lambda)
		= \prod_{\substack{m\in M_{\lambda}, \, n\notin M_{\lambda} \\ m>n}} (m-n)
\end{equation}
where each factor represents one of the $|\lambda|$ hook lengths. Obviously, the same identity holds if we replace~$M_\lambda$ by any other equivalent Maya diagram. 

\begin{example}\label{ex:HookLengths}
Let $\lambda=(4,4,4,1,1)$. Figure~\ref{fig:HookLengths} shows the Young diagram where each box displays its hook length, and the associated Maya diagram where several hook lengths are indicated by the distances between empty and filled boxes. The other hook lengths could be visualized in a similar way.
\end{example}

\begin{figure}[t]
	\centering
	\begin{tikzpicture}[scale=0.5]
	\draw (-8,0.5) node 
	{
		\begin{ytableau}
		{\color{gray} 8}&{\color{gray} 5}&{\color{gray} 4}&{\color{gray} 3}\\
		7&4&3&2\\
		{\color{lightgray} 6}&{\color{lightgray} 3}&{\color{lightgray} 2}&{\color{lightgray} 1}\\
		2\\
		1
		\end{ytableau}
	};
	
	\draw[very thin,color=gray] (-2.5,1) grid (11.5,0);
	\draw (-3,0.5) node {$\dots$};
	\draw (12,0.5) node {$\dots$};
	\foreach \x in {0,1,2,4,5,9,10,11}
	{
		\draw (\x+0.5-2,0.5) node {$\bullet$};
	}
	
	\draw[gray] (1.5,1) to[out=90,in=90] node[pos=0.1,left]{\footnotesize{{\color{gray} 8}}} (9.5,1);
	\draw[gray] (4.5,1) to[out=90,in=90] node[pos=0.1,left]{\footnotesize{{\color{gray} 5}}} (9.5,1);
	\draw[gray] (5.5,1) to[out=90,in=90] node[pos=0.1,left]{\footnotesize{{\color{gray} 4}}} (9.5,1);
	\draw[gray] (6.5,1) to[out=90,in=90] node[pos=0.1,left]{\footnotesize{{\color{gray} 3}}} (9.5,1);
	
	\draw[lightgray] (1.5,0) to[out=-90,in=-90] node[pos=0.1,left]{\footnotesize{{\color{lightgray} 6}}} (7.5,0);
	\draw[lightgray] (4.5,0) to[out=-90,in=-90] node[pos=0.2,left]{\footnotesize{{\color{lightgray} 3}}} (7.5,0);
	\draw[lightgray] (5.5,0) to[out=-90,in=-90] node[pos=0.2,left]{\footnotesize{{\color{lightgray} 2}}} (7.5,0);
	\draw[lightgray] (6.5,0) to[out=-90,in=-90] node[pos=0.3,left]{\footnotesize{{\color{lightgray} 1}}} (7.5,0);
	\end{tikzpicture}
	\caption{Hook lengths represented via the Maya diagram (Example~\ref{ex:HookLengths}).}
	\label{fig:HookLengths}
\end{figure}
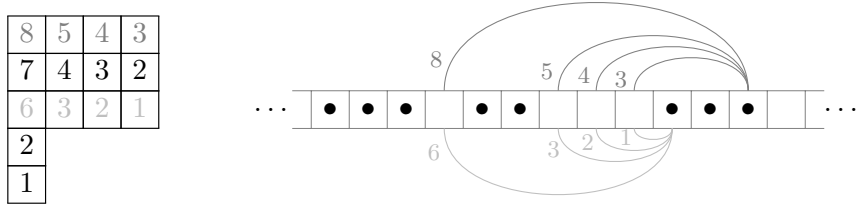

We split the set of hook lengths into two disjoint parts: the ones that are multiples of~$p$ ($p$-folds), and the others which are not multiples of~$p$ (non-$p$-folds). Accordingly we write
\begin{equation}\label{eq:Split}
	H(\lambda)
		= H_{\text{non-$p$-fold}}(\lambda) \cdot H_{\text{$p$-fold}}(\lambda).
\end{equation}
This factorization naturally appears when we express the hook lengths via filled and empty boxes in the Maya diagrams $M^{(0)},M^{(1)},\dots,M^{(p-1)}$ of the $p$-quotient. Hereby recall that if~$m$ is an element of the~$i^\textrm{th}$ Maya diagram in the quotient, then~$pm+i$ is an element of the original Maya diagram. Therefore~\eqref{eq:HookLengthsMayaDiagram} can be written as
\begin{equation}\label{eq:HookLengthsQuotient}
	H(\lambda)
		= \prod_{i=0}^{p-1} \prod_{j=0}^{p-1}\prod_{\substack{m\in M^{(i)} \\ n\notin M^{(j)} \\ pm+i>pn+j}} \big((pm+i)-(pn+j)\big)
\end{equation}
where $M^{(i)}=\widehat{M}_{\mu^{(i)}}+c_i$ for~$i=0,1,\dots,p-1$, see Section~\ref{sec:Partitions}. The hook lengths that are multiples of~$p$ are obtained when~$i=j$, this yields
\begin{align}
	H_{\text{$p$-fold}}(\lambda)
	&= \prod_{i=0}^{p-1} \prod_{\substack{m\in M^{(i)} \\ n\notin M^{(i)} \\ m>n}} p(m-n), 
	\label{eq:HookLengthsPFold}\\
	H_{\text{non-$p$-fold}}(\lambda)
	&= \prod_{i=0}^{p-1} \prod_{\substack{j=0\\ j\neq i}}^{p-1}\prod_{\substack{m\in M^{(i)} \\ n\notin M^{(j)} \\ pm+i>pn+j}} \big((pm+i)-(pn+j)\big).
	\label{eq:HookLengthsNonPFold}
\end{align}
This shows that the set of hook lengths from any $p$-core partition (a partition whose $p$-core is the partition itself) does not contain any numbers that are multiples of~$p$, moreover, the hook lengths that are a multiple of~$p$ indicate the $p$-quotient as explained in Section~\ref{sec:CoreViaBorderStrips}. Finally, using $M^{(i)}=\widehat{M}_{\mu^{(i)}}+c_i$ which says that $m \in \widehat{M}_{\mu^{(i)}}$ if and only if $m+c_i\in M^{(i)}$, we can write~\eqref{eq:HookLengthsNonPFold} as
\begin{equation}\label{eq:HookLengthsNonPFoldBis}
	H_{\text{non-$p$-fold}}(\lambda)
		= \prod_{i=0}^{p-1} \prod_{\substack{j=0\\ j\neq i}}^{p-1}\prod_{\substack{m\in \widehat{M}_{\mu^{(i)}} \\ n\notin \widehat{M}_{\mu^{(j)}} \\ p(m+c_i)+i>p(n+c_j)+j}} \big((p(m+c_i)+i)-(p(n+c_j)+j)\big).	
\end{equation}

\begin{example}\label{ex:HookLengthsQuotient}
Reconsider $\lambda=(4,4,4,1,1)$ and take~$p=3$. Its $3$-quotient is $\mu=((1),(2),(1))$ and its characteristic vector is given by~$c_\lambda=(1,-1,0)$, or equivalently, the $3$-core is $\bar{\lambda}=(1,1)$. In Figure~\ref{fig:HookLengthsQuotient} we visualize the process to obtain these data and indicate the same hook lengths as shown in the Maya diagram of Figure~\ref{fig:HookLengths}. The hook lengths that are divisible by~$3$ are obtained via pairs of empty and filled boxes in the same Maya diagram.
\end{example}

\begin{figure}[t]
	\centering
	\begin{tikzpicture}[scale=0.5]
	\foreach \x in {-6,-3,0,3,6}
	{
		\draw[fill=black!08!white,draw=none] (7+\x,1) rectangle (7+1+\x,0);
	}
	\foreach \x in {-7,-4,-1,2,5}
	{
		\draw[fill=black!32!white,draw=none] (7+\x,1) rectangle (7+1+\x,0);
	}
	\foreach \x in {-8,-5,-2,1,4,7}
	{
		\draw[fill=black!20!white,draw=none] (7+\x,1) rectangle (7+1+\x,0);
	}
	\draw[fill=black!32!white,draw=none] (7+8,1) rectangle (7+0.5+8,0);
	\draw[fill=black!08!white,draw=none] (7-9+0.5,1) rectangle (7-8,0);
	\draw[very thin,color=gray] (-1.5,1) grid (15.5,0);
	\draw[thick,color=black] (7,1.4) -- (7,-0.4);
	\draw (-2,0.5) node {$\dots$};
	\draw (16,0.5) node {$\dots$};
	\draw (-4,0.5) node {$\widehat{M}_{\lambda}$};
	\foreach \x in {-8,-7,-6,-4,-3,1,2,3}
	{
		\draw (\x+0.5+7,0.5) node {$\bullet$};
	}
	
	\draw[fill=black!08!white,draw=none] (2.5,-2) rectangle (11.5,-3);
	\draw[very thin,color=gray] (2.5,-2) grid (11.5,-3);
	\draw[thick,color=black] (7,-1.6) -- (7,-3.4);
	\draw[thick,color=black,dotted] (8,-1.6) -- (8,-3.4);
	\draw (2,-2.5) node {$\dots$};
	\draw (12,-2.5) node {$\dots$};
	\draw (0,-2.5) node {$M^{(0)}$};
	\foreach \x in {-4,-3,-2,-1,1}
	{
		\draw (\x+7+0.5,-2.5) node {$\bullet$};
	}
	
	\draw[fill=black!20!white,draw=none] (2.5,-5) rectangle (11.5,-6);
	\draw[very thin,color=gray] (2.5,-5) grid (11.5,-6);
	\draw[thick,color=black] (7,-4.6) -- (7,-6.4);
	\draw[thick,color=black,dotted] (6,-4.6) -- (6,-6.4);
	\draw (2,-5.5) node {$\dots$};
	\draw (12,-5.5) node {$\dots$};
	\draw (0,-5.5) node {$M^{(1)}$};
	\foreach \x in {-4,-3,0}
	{
		\draw (\x+7+0.5,-5.5) node {$\bullet$};
	}
	
	\draw[fill=black!32!white,draw=none] (2.5,-8) rectangle (11.5,-9);
	\draw[very thin,color=gray] (2.5,-8) grid (11.5,-9);
	\draw[thick,color=black] (7,-7.6) -- (7,-9.4);
	\draw (2,-8.5) node {$\dots$};
	\draw (12,-8.5) node {$\dots$};
	\draw (0,-8.5) node {$M^{(2)}$};
	\foreach \x in {-4,-3,-2,0}
	{
		\draw (\x+7+0.5,-8.5) node {$\bullet$};
	}
	
	\draw[gray] (8.5,-3) to[out=-90,in=90] node[pos=0.8,left]{\footnotesize{{\color{gray} 8}}} (5.5,-5);
	\draw[gray] (8.5,-3) to[out=-90,in=90] node[pos=0.8,left]{\footnotesize{{\color{gray} 5}}} (6.5,-5);
	\draw[gray] (8.5,-3) to[out=-90,in=90] node[pos=0.9,left]{\footnotesize{{\color{gray} 4}}} (6.5,-8);
	\draw[gray] (8.5,-2) to[out=90,in=90] node[pos=0.5,above]{\footnotesize{{\color{gray} 3}}} (7.5,-2);
	
	\draw[lightgray] (7.5,-6) to[out=-90,in=-90] node[pos=0.7,left]{\footnotesize{{\color{lightgray} 3}}} (6.5,-6);
	\draw[lightgray] (7.5,-6) to[out=-90,in=-90] node[pos=0.8,left]{\footnotesize{{\color{lightgray} 6}}} (5.5,-6);
	\draw[lightgray] (7.5,-6) to[out=-90,in=90] node[pos=0.6,right]{\footnotesize{{\color{lightgray} 2}}} (6.5,-8);
	\draw[lightgray] (7.5,-5) to[out=90,in=-90] node[pos=0.25,right]{\footnotesize{{\color{lightgray} 1}}} (7.5,-3);
	
	\end{tikzpicture}
	\caption{Hook lengths represented via the Maya diagrams of the $3$-quotient (Example~\ref{ex:HookLengthsQuotient}).}
	\label{fig:HookLengthsQuotient}
\end{figure}
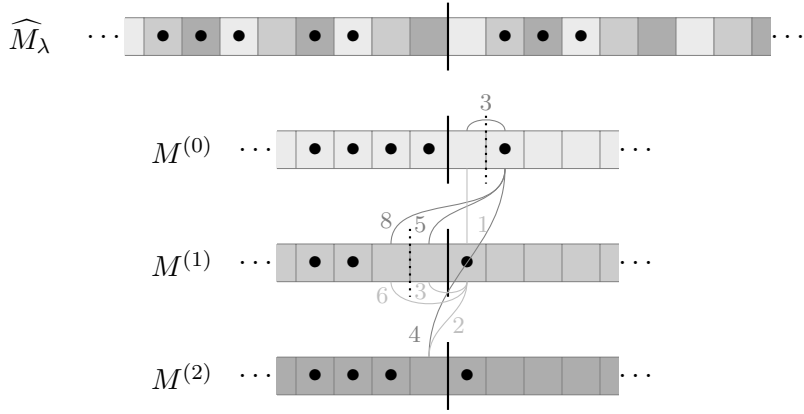

In~\cite[Section~4]{Bessenrodt}, a remarkable result about hook lengths is obtained: for any partition~$\lambda$, the set of hook lengths (including multiplicities) of the $p$-core~$\bar{\lambda}$ is a subset of the hook lengths (including multiplicities) of the partition~$\lambda$ itself. Furthermore, as all hook lengths of~$\bar{\lambda}$ are not multiples of~$p$, we have that each factor of~$H(\bar{\lambda})$ is a factor of~$H_{\text{non-$p$-fold}}(\lambda)$. The number of extra factors in~$H_{\text{non-$p$-fold}}(\lambda)$ is indicated in the following lemma.

\begin{lemma}\label{lem:NumberOfFactors}
For any positive integer~$p$, and any partition~$\lambda$ with $p$-quotient~$\mu$ and $p$-core~$\bar{\lambda}$, we have that each factor of~$H(\bar{\lambda})$ is a factor of~ $H_{\text{non-$p$-fold}}(\lambda)$, moreover, the latter product has exactly $(p-1)|\mu|$ more factors than the product~$H(\bar{\lambda})$.
\end{lemma}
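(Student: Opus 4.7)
The plan is to combine two ingredients: the theorem of Bessenrodt cited just above the lemma (which asserts that the multiset of hook lengths of $\bar\lambda$ is a submultiset of the multiset of hook lengths of $\lambda$), and a direct enumeration of factors based on the factorization~\eqref{eq:Split} together with the Maya-diagram expression~\eqref{eq:HookLengthsPFold}.

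For the inclusion of factors, I would invoke Bessenrodt's theorem~\cite{Bessenrodt} to conclude that every factor of $H(\bar\lambda)$ is a factor of $H(\lambda)$. Because $\bar\lambda$ is a $p$-core, the observation recorded just after~\eqref{eq:HookLengthsNonPFold} tells us that no hook length of $\bar\lambda$ is a multiple of $p$; hence each factor of $H(\bar\lambda)$ lies, with matching multiplicity, in $H_{\text{non-$p$-fold}}(\lambda)$.

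For the count of factors, I would argue as follows. Trivially $H(\bar\lambda)$ has $|\bar\lambda|$ factors, one per box of the Young diagram of $\bar\lambda$. By~\eqref{eq:Split}, the number of factors of $H_{\text{non-$p$-fold}}(\lambda)$ equals $|\lambda|$ minus the number of factors of $H_{\text{$p$-fold}}(\lambda)$. Looking at~\eqref{eq:HookLengthsPFold}, the $i$-th inner product is taken over the same type of pairs as in~\eqref{eq:HookLengthsMayaDiagram} applied to the Maya diagram $M^{(i)}$; since $M^{(i)}$ is equivalent to $\widehat{M}_{\mu^{(i)}}$ and the identity~\eqref{eq:HookLengthsMayaDiagram} is invariant under Maya-diagram equivalence, that inner product contributes exactly $|\mu^{(i)}|$ factors. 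Summing over $i$ gives $|\mu|$ factors in $H_{\text{$p$-fold}}(\lambda)$. Therefore $H_{\text{non-$p$-fold}}(\lambda)$ has $|\lambda|-|\mu|$ factors, which by~\eqref{eq:SizeLambda} equals $|\bar\lambda|+(p-1)|\mu|$. Subtracting the $|\bar\lambda|$ factors of $H(\bar\lambda)$ leaves precisely $(p-1)|\mu|$ additional factors.

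The only non-trivial ingredient is Bessenrodt's multiset inclusion, which we are entitled to quote; the remainder is pure bookkeeping once the Maya-diagram factorization into $p$-fold and non-$p$-fold parts is in place. The mild subtlety to keep in mind is that all statements should be read as multiset (rather than set) inclusions, so that multiplicities of repeated hook lengths are tracked correctly on both sides of the comparison.
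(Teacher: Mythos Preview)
Your argument is correct and mirrors the paper's proof almost exactly: both invoke Bessenrodt's multiset inclusion for the first claim, and both obtain the factor count by noting that $H(\lambda)$ has $|\lambda|$ factors, that $H_{\text{$p$-fold}}(\lambda)$ contributes $|\mu|$ of them, and then apply $|\lambda|=|\bar\lambda|+p|\mu|$. The only cosmetic difference is that you justify the $|\mu|$ count explicitly via~\eqref{eq:HookLengthsPFold}, whereas the paper simply asserts that the $p$-fold hook lengths are in bijection with the boxes of the $p$-quotient.
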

\begin{proof}
The first statement follows from the reasoning stated before the lemma. For the second part, recall that the size of the partition~$\lambda$ indicates the total number of hook lengths, and that the hook lengths which are a multiple of~$p$ can be identified with the quotient~$\mu$, i.e., the number of hook lengths that are a multiple of~$p$ equals~$|\mu|$. Hence, the difference in the number of factors is given by~$|\lambda|-|\mu|-|\bar{\lambda}|$. The result then follows from~\eqref{eq:SizeLambda}.
\end{proof}

The previous lemma tells us that the fraction $H_{\text{non-$p$-fold}}(\lambda)/H(\bar{\lambda})$ is actually just a product of~$(p-1)|\mu|$ hook lengths in~$\lambda$ that are not multiples of~$p$. We now have the following result related to the set-up of Theorem~\ref{thm:AsymptoticResult}.

\begin{lemma}\label{lem:HookLengths}
Fix some positive integer~$p\geq1$ as well as a $p$-quotient $\mu=(\mu^{(0)},\mu^{(1)},\dots,\mu^{(p-1)})$. Let $c(k)=(c_0(k),c_1(k),\dots,c_{p-1}(k))\in\mathbb{Z}^{p}$ be a characteristic vector for any~$k\in\mathbb{N}$ such that each entry satisfies $c_i(k)=a_i k + o(k)$ as $k\to + \infty$ for some~$a_i\in\mathbb{R}$. For any $k\in\mathbb{N}$, denote by~$\lambda(k)$ the partition identified with the $p$-quotient~$\mu$ and the $p$-core associated to~$c(k)$. Then 
\begin{equation}\label{eq:Fraction}
	\frac{H_{\text{non-$p$-fold}}(\lambda(k))}{H(\bar{\lambda}(k))}
		= A \, k^{(p-1)|\mu|} + o(k^{(p-1)|\mu|})
\end{equation}
as~$k\to+\infty$, and where
\begin{equation}\label{eq:HookLengthsLeadingTerm}
	A = (-1)^{\htt_p(\lambda(k)/\bar{\lambda}(k))} \, p^{(p-1)|\mu|} \, \prod_{i=0}^{p-1}\alpha_{i}^{|\mu^{(i)}|}
\end{equation}
with~$\alpha_i$ defined in~\eqref{eq:Alpha}.
\end{lemma}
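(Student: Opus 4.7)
The plan is to read both $H_{\text{non-$p$-fold}}(\lambda(k))$ and $H(\bar{\lambda}(k))=H_{\text{non-$p$-fold}}(\bar{\lambda}(k))$ from the $p$-modular Maya diagrams via~\eqref{eq:HookLengthsNonPFoldBis}, using that the $p$-core expression is obtained from the same formula by replacing each $\widehat{M}_{\mu^{(i)}}$ by $M_\emptyset$. The ratio then factors over ordered pairs $(i,j)$ with $i\neq j$, and for each such pair the two Maya diagrams in play differ only by a finite, $k$-independent symmetric difference: write $\widehat{M}_{\mu^{(i)}}=(M_\emptyset\cup A_i)\setminus B_i$ with $A_i\subseteq\mathbb{Z}_{\geq 0}$ and $B_i\subseteq\mathbb{Z}_{<0}$ both finite, so that $|A_i|=|B_i|$ (charge zero) and $\sum_{a\in A_i}a-\sum_{b\in B_i}b=|\mu^{(i)}|$.

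Assume first that the $a_i$ are pairwise distinct (the degenerate case will follow since the leading coefficient in~\eqref{eq:HookLengthsLeadingTerm} vanishes whenever two $a_i$'s coincide, and the same analysis yields $o(k^{(p-1)|\mu|})$). First I dispose of ordered pairs with $a_i<a_j$: the inequality $p(m+c_i)+i>p(n+c_j)+j$ is eventually violated by every relevant $(m,n)$ since $c_i(k)-c_j(k)\to-\infty$, so both the numerator and denominator products at $(i,j)$ become empty for large $k$ and their ratio equals~$1$. For pairs with $a_i>a_j$ I expand
\[
\chi_{\widehat{M}_{\mu^{(i)}}+c_i}(m)\bigl(1-\chi_{\widehat{M}_{\mu^{(j)}}+c_j}(n)\bigr)-\chi_{M_\emptyset+c_i}(m)\bigl(1-\chi_{M_\emptyset+c_j}(n)\bigr)
\]
into four kinds of corrections indexed by the finite sets $A_i,B_i,A_j,B_j$. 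The charge balances $|A_i|=|B_i|$ and $|A_j|=|B_j|$ make the $k$-divergent pieces cancel pairwise, and each surviving block rewrites as a ratio of Pochhammer-type products $\prod_{r}(pr+(i-j))$ whose lengths differ by $O(1)$. Applying $\Gamma(D+x)/\Gamma(D+y)\sim D^{x-y}$ with $D=c_i(k)-c_j(k)\to+\infty$, and invoking the displacement identities for both $i$ and $j$, I obtain the pair-level asymptotic
\[
\frac{P^{(i,j)}_{\lambda(k)}}{P^{(i,j)}_{\bar{\lambda}(k)}}=\bigl(p(c_i(k)-c_j(k))\bigr)^{|\mu^{(i)}|+|\mu^{(j)}|}(1+o(1)).
\]

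Multiplying these over ordered pairs with $a_i>a_j$ and using the combinatorial identity $\sum_{a_i>a_j}(|\mu^{(i)}|+|\mu^{(j)}|)=(p-1)|\mu|$ accounts for the claimed power $k^{(p-1)|\mu|}$. With the ordering $a_0<a_1<\cdots<a_{p-1}$, the sign of $\alpha_i$ equals $(-1)^{p-1-i}$, so a short rearrangement recasts the leading coefficient as $p^{(p-1)|\mu|}(-1)^{\sum_i(p-1-i)|\mu^{(i)}|}\prod_i\alpha_i^{|\mu^{(i)}|}$. Since the left-hand side of~\eqref{eq:Fraction} is a positive ratio of positive hook-length products, matching with~\eqref{eq:HookLengthsLeadingTerm} forces the parity identity $\htt_p(\lambda(k)/\bar{\lambda}(k))\equiv\sum_i(p-1-i)|\mu^{(i)}|\pmod 2$ in this ordering, and this is precisely what one reads off from Lemma~\ref{lem:HeightMayaDiagram} by counting the number of beads each bead-move passes during the construction of $\lambda(k)$ from $\bar{\lambda}(k)$ through $|\mu|$ successive border strips of size $p$; arbitrary orderings of the $a_i$ transform both sides compatibly. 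The main obstacle will be the cancellation-and-Pochhammer bookkeeping in the pair-level asymptotic; the sign identification is then a combinatorial corollary of Lemma~\ref{lem:HeightMayaDiagram}.
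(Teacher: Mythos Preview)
Your approach is correct and genuinely different from the paper's. The paper proceeds by induction on~$|\mu|$: in the inductive step one removes a single box from some $\mu^{(i^\star)}$, identifies exactly $p-1$ ``new'' factors appearing in $H_{\text{non-$p$-fold}}(\lambda(k))$ relative to $H_{\text{non-$p$-fold}}(\tilde{\lambda}(k))$, shows their product is $(-1)^{d}\,p^{p-1}\alpha_{i^\star}k^{p-1}+o(k^{p-1})$, and matches the sign $d$ with $\htt(\lambda(k)/\tilde{\lambda}(k))$ via Lemma~\ref{lem:HeightMayaDiagram}. You instead factor the whole ratio over ordered pairs $(i,j)$ and compute each pair directly, using the finite symmetric differences $A_i,B_i$ together with the charge balance $|A_i|=|B_i|$ and the displacement identity $\sum_{a\in A_i}a-\sum_{b\in B_i}b=|\mu^{(i)}|$ to extract the exponent $|\mu^{(i)}|+|\mu^{(j)}|$ from Pochhammer asymptotics.

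Your route is more analytic and global; the paper's is purely combinatorial and lets the sign emerge one border strip at a time. Both ultimately invoke Lemma~\ref{lem:HeightMayaDiagram}, but you do so globally, using that for large $k$ every bead-move in residue $i$ passes exactly $\#\{j:a_j>a_i\}$ beads. Two small points to tighten: first, your sign argument implicitly relies on the $M^{(j)}$ being ``well separated'' for large $k$, so that this bead-count is independent both of the order in which strips are added and of which box of $\mu^{(i)}$ is involved---true, but worth a sentence. Second, in the degenerate case $a_i=a_j$ your pair-level decomposition no longer splits cleanly into ``empty'' and ``Pochhammer'' pieces, since $c_i(k)-c_j(k)=o(k)$ need not tend to $\pm\infty$; it is cleaner there to invoke Lemma~\ref{lem:NumberOfFactors} directly (there are always exactly $(p-1)|\mu|$ surviving factors) together with the factor-wise bound $p|c_i(k)-c_j(k)|+O(1)$, which gives $o(k^{(p-1)|\mu|})$ as needed.
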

\begin{proof}
The first part of the proof shows the asymptotic behavior~\eqref{eq:Fraction} while the second part deals with the coefficient~\eqref{eq:HookLengthsLeadingTerm}.
	
\paragraph{Part 1.}
Lemma~\ref{lem:NumberOfFactors} states that the left-hand side of~\eqref{eq:Fraction} is an integer as each factor in the product of the denominator is also a factor of the product in the numerator, moreover, it indicates that after the division there are~$(p-1)|\mu|$ remaining factors. Using~\eqref{eq:HookLengthsNonPFoldBis}, the numerator of the fraction can be written as	
\begin{equation}\label{eq:ProofFraction0}
	H_{\text{non-$p$-fold}}(\lambda(k))
		= \prod_{i=0}^{p-1} \prod_{\substack{j=0\\ j\neq i}}^{p-1}\prod_{\substack{m\in \widehat{M}_{\mu^{(i)}} \\ n\notin \widehat{M}_{\mu^{(j)}} \\ p(m+c_i(k))+i>p(n+c_j(k))+j}} \big( p(c_i(k)-c_j(k)) +pm+i-pn-j \big).
\end{equation}
Then, using the limiting behavior of the entries in the characteristic vector, we find that
\begin{equation}\label{eq:ProofFraction1}
	H_{\text{non-$p$-fold}}(\lambda(k))
		= \prod_{i=0}^{p-1} \prod_{\substack{j=0\\ j\neq i}}^{p-1}\prod_{\substack{m\in \widehat{M}_{\mu^{(i)}} \\ n\notin \widehat{M}_{\mu^{(j)}} \\ p(m+c_i(k))+i>p(n+c_j(k))+j}} \big( pk(a_i-a_j) +o(k) \big)
\end{equation}
as~$k\to+\infty$. As in the left-hand side of~\eqref{eq:Fraction} exactly~$(p-1)|\mu|$ factors of~$H_{\text{non-$p$-fold}}(\lambda(k))$ survive, we directly get the asymptotic result via~\eqref{eq:ProofFraction1}. 
	
\paragraph{Part 2.}
We now approach by induction on~$|\mu|$ to determine the coefficient associated to the term~$k^{(p-1)|\mu|}$ in~\eqref{eq:Fraction}. If we reconsider~\eqref{eq:ProofFraction1}, we observe that the coefficient~$A$ can be determined by counting how many factors arise for any 2 different Maya diagrams, i.e., only the part~$a_i-a_j$ is relevant which depends on the associated Maya diagrams~$M^{(i)}$ and~$M^{(j)}$.
	
If~$\mu=\emptyset$, then~$\lambda(k)$ is a $p$-core for any~$k$, i.e., its $p$-core equals the partition itself. We then have $H_{\text{non-$p$-fold}}(\lambda(k))=H(\lambda(k))=H(\bar{\lambda}(k))$. Hence, the left-hand side of~\eqref{eq:Fraction} equals~1 and so the result is trivially true in this case.
	
Next fix a $p$-quotient $\mu=(\mu^{(0)},\mu^{(1)},\dots,\mu^{(p-1)})$ with~$|\mu|>0$ and assume that the result holds for any other smaller $p$-quotient. Take $\tilde{\mu}=(\tilde{\mu}^{(0)},\tilde{\mu}^{(1)},\dots,\tilde{\mu}^{(p-1)})$ such that $\tilde{\mu}<_1\mu$. That is, there exists a non-negative integer~$i^{\star}$ such that $\tilde{\mu}^{(i^{\star})} <_1 \mu^{(i^{\star})}$ while $\tilde{\mu}^{(i)} = \mu^{(i)}$ for~$i\neq i^{\star}$. Further, let~$\lambda(k)$ (respectively~$\tilde{\lambda}(k)$) be the partition identified by the $p$-quotient~$\mu$ (respectively~$\tilde{\mu}$) and $p$-core associated to~$c(k)$, and let $M^{(0)},M^{(1)},\dots,M^{(p-1)}$ (respectively $\widetilde{M}^{(0)},\widetilde{M}^{(1)},\dots,\widetilde{M}^{(p-1)}$) denote the Maya diagrams in the construction of the $p$-quotient of~$\lambda(k)$ (respectively~$\tilde{\lambda}(k)$). That is, $M^{(i)}=\widehat{M}_{\mu^{(i)}}+c_i$ and $\widetilde{M}^{(i)}=\widehat{M}_{\tilde{\mu}^{(i)}}+c_i$ for all~$i$, and note that $M^{(i)}=\widetilde{M}^{(i)}$ for~$i\neq i^{\star}$. We so obtain the following set-up.
\begin{align*}
	\lambda(k) \longleftrightarrow \big(c(k), \mu=(&\mu^{(0)},\mu^{(1)},\dots,\mu^{(p-1)}) \big) 
	\text{ with Maya diagrams } M^{(0)},M^{(1)},\dots,M^{(p-1)} 
	\\
	&\tilde{\mu}^{(i)} = \mu^{(i)} \text{ for }  i\neq i^{\star} \hspace{4cm} M^{(i)}=\widetilde{M}^{(i)} \text{ for } i\neq i^{\star}
	\\
	\tilde{\lambda}(k) \longleftrightarrow \big(c(k), \tilde{\mu}=(&\tilde{\mu}^{(0)},\tilde{\mu}^{(1)},\dots,\tilde{\mu}^{(p-1)}) \big) 
	\text{ with Maya diagrams } \widetilde{M}^{(0)},\widetilde{M}^{(1)},\dots,\widetilde{M}^{(p-1)} 	
\end{align*}
As $\tilde{\mu}^{(i^{\star})} <_1 \mu^{(i^{\star})}$, the difference between the Maya diagrams~$M^{(i^{\star})}$ and~$\widetilde{M}^{(i^{\star})}$ is given by a movement of a bullet 1 step to the right. In Figure~\ref{fig:FilledBox} we visualize this move. The crosses indicate that a box can either be filled or empty, but corresponding crosses (at the same location) in both Maya diagrams should be the same. The integer~$t$ denotes the location of the bullet in Maya diagram~$\widetilde{M}^{(i^{\star})}$ which is moved to box~$t+1$ in~$M^{(i^{\star})}$. Note that this is only possible when the box located at~$t+1$ in~$\widetilde{M}^{(i^{\star})}$ is empty.
	
We now obtain the coefficient associated to~$k^{(p-1)|\mu|}$ in the fraction $H_{\text{non-$p$-fold}}(\lambda(k))/H(\bar{\lambda}(k))$ using the induction hypothesis on $H_{\text{non-$p$-fold}}(\tilde{\lambda}(k))/H(\bar{\lambda}(k))$. As both fractions have the same denominator, the same factors in both fractions are canceled. Hence, it is sufficient to describe the difference between all factors in the numerators to compare the behavior of both fractions. 
	
To study the factors in $H_{\text{non-$p$-fold}}(\tilde{\lambda}(k))$ and in $H_{\text{non-$p$-fold}}(\lambda(k))$, we describe these factors by pairs of empty and filled boxes in the $p$-quotient as stated in~\eqref{eq:HookLengthsNonPFoldBis}. As $M^{(i)}=\widetilde{M}^{(i)}$ for~$i\neq i^{\star}$, and as the Maya diagrams~$M^{(i^{\star})}$ and~$\widetilde{M}^{(i^{\star})}$ only differ in two boxes, see Figure~\ref{fig:FilledBox}, we have that all factors in both products are the same except those related with the boxes~$t$ and~$t+1$ in~$\widetilde{M}^{(i^{\star})}$ and~$M^{(i^{\star})}$. We now precisely describe these factors.

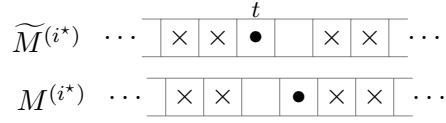
\begin{figure}[t]
	\centering
	\begin{tikzpicture}[scale=0.5]
	\draw[very thin,color=gray] (4.5,1) grid (11.5,0);
	\draw (4,0.5) node {$\dots$};
	\draw (12,0.5) node {$\dots$};
	\draw (2,0.5) node {$\widetilde{M}^{(i^{\star})}$};
	\foreach \x in {-2,-1,2,3}
	{
		\draw (\x+0.5+7,0.5) node {$\times$};
	}
	\foreach \x in {0}
	{
		\draw (\x+0.5+7,0.5) node {$\bullet$};
	}
	\draw[] (7.5,1.25) node {\footnotesize{$t$}};
	\end{tikzpicture}
	\vspace{0.125cm}
	
	\begin{tikzpicture}[scale=0.5]
	\draw[very thin,color=gray] (4.5,1) grid (11.5,0);
	\draw (4,0.5) node {$\dots$};
	\draw (12,0.5) node {$\dots$};
	\draw (2,0.5) node {$M^{(i^{\star})}$};
	\foreach \x in {-2,-1,2,3}
	{
		\draw (\x+0.5+7,0.5) node {$\times$};
	}
	\foreach \x in {1}
	{
		\draw (\x+0.5+7,0.5) node {$\bullet$};
	}
	\end{tikzpicture}
	\caption{The difference between the Maya diagrams in the $p$-quotients.}
	\label{fig:FilledBox}
\end{figure}
	
First, consider the factors in $H_{\text{non-$p$-fold}}(\tilde{\lambda}(k))$ that are obtained via the empty box located at $t+1$ in $\widetilde{M}^{(i^{\star})}$. These factors are established in combination with filled boxes located at $t+1$ in $\widetilde{M}^{(i)}$ for~$i>i^{(\star)}$, or with filled boxes located at~$n>t+1$ in~$\widetilde{M}^{(i)}$ for~$i\neq i^{\star}$. Pick any such factor which is established via a filled box in~$\widetilde{M}^{(i)}$ and recall that $\widetilde{M}^{(i)}=M^{(i)}$. Then there is a related factor in $H_{\text{non-$p$-fold}}(\lambda(k))$ which is obtained via the same filled box in~$M^{(i)}$ in combination with the empty box located at~$t$ in~$M^{(i^{\star})}$.
	
Second, consider the factors in $H_{\text{non-$p$-fold}}(\tilde{\lambda}(k))$ that are obtained via the filled box located at~$t$ in~$\widetilde{M}^{(i^{\star})}$. These factors are established in combination with empty boxes located at $t$ in $\widetilde{M}^{(i)}$ for~$i<i^{(\star)}$, or with empty boxes located at~$n<t$ in~$\widetilde{M}^{(i)}$ for~$i\neq i^{\star}$. Pick any such factor which is established via an empty box in~$\widetilde{M}^{(i)}$ and recall that $\widetilde{M}^{(i)}=M^{(i)}$. Then there is a related factor in $H_{\text{non-$p$-fold}}(\lambda(k))$ which is obtained via the same empty box in~$M^{(i)}$ in combination with the filled box located at~$t+1$ in~$M^{(i^{\star})}$.
	
As an intermediate conclusion, the two types of described factors in $H_{\text{non-$p$-fold}}(\tilde{\lambda}(k))$, obtained via filled and empty boxes in~$\widetilde{M}^{(i^{\star})}$ and~$\widetilde{M}^{(i)}$, are in 1-1 correspondence with factors in $H_{\text{non-$p$-fold}}(\lambda(k))$ which are obtained via filled and empty boxes in~$M^{(i^{\star})}$ and~$M^{(i)}$. Stated otherwise, the factors themselves are different in both products, but they are obtained via the same empty or the same filled boxes in the Maya diagrams $\widetilde{M}^{(i)}=M^{(i)}$ for~$i\neq i^{\star}$. Hence, the linear behavior in~$k$ for these factors is the same in both products~$H_{\text{non-$p$-fold}}(\tilde{\lambda}(k))$ and~$H_{\text{non-$p$-fold}}(\lambda(k))$.
	
Third, the movement of bullet as indicated in Figure~\ref{fig:FilledBox} induces exactly $p-1$ new factors in $H_{\text{non-$p$-fold}}(\lambda(k))$ that have no analogue in $H_{\text{non-$p$-fold}}(\tilde{\lambda}(k))$. We now precisely describe these new factors. 
	
Assume first that~$j<i^{\star}$ and consider~$M^{(j)}$. The box located at~$t+1$ is either filled or empty. If it is filled, then this box together with the empty box at~$t$ in~$M^{(i^{\star})}$ leads to a factor. If it is empty, then this box together with the filled box at~$t+1$ in~$M^{(i^{\star})}$ leads to a factor, see Figure~\ref{fig:NewFactors1}. These factors were not present in~$H_{\text{non-$p$-fold}}(\tilde{\lambda}(k))$ because in~$\widetilde{M}^{(i^{\star})}$, the box located at~$t$ is filled while the box at~$t+1$ is empty. 
	
\begin{figure}[b]
	\centering
	\begin{tikzpicture}[scale=0.5]
		\draw[very thin,color=gray] (4.5,1) grid (11.5,0);
		\draw (4,0.5) node {$\dots$};
		\draw (12,0.5) node {$\dots$};
		\draw (2,0.5) node {$M^{(j)}$};
		\foreach \x in {-2,-1,0,2,3}
		{
			\draw (\x+0.5+7,0.5) node {$\times$};
		}
		\foreach \x in {}
		{
			\draw (\x+0.5+7,0.5) node {$\bullet$};
		}
		\draw[] (7.5,1.25) node {\footnotesize{$t$}};
		
		\draw[very thin,color=gray] (4.5,-2) grid (11.5,-3);
		\draw (4,-2.5) node {$\dots$};
		\draw (12,-2.5) node {$\dots$};
		\draw (2,-2.5) node {$M^{(i^{\star})}$};
		\foreach \x in {-2,-1,2,3}
		{
			\draw (\x+0.5+7,-2.5) node {$\times$};
		}
		\foreach \x in {1}
		{
			\draw (\x+0.5+7,-2.5) node {$\bullet$};
		}
		
		\draw[] (8.5,-2) to[out=90,in=-90] node[pos=0.5,right]{\footnotesize{new}} (8.5,0);
		
		\draw[very thin,color=gray] (4.5+15,1) grid (11.5+15,0);
		\draw (4+15,0.5) node {$\dots$};
		\draw (12+15,0.5) node {$\dots$};
		\draw (2+15,0.5) node {$M^{(j)}$};
		\foreach \x in {-2,-1,0,2,3}
		{
			\draw (\x+0.5+7+15,0.5) node {$\times$};
		}
		\foreach \x in {1}
		{
			\draw (\x+0.5+7+15,0.5) node {$\bullet$};
		}
		\draw[] (7.5+15,1.25) node {\footnotesize{$t$}};
		
		\draw[very thin,color=gray] (4.5+15,-2) grid (11.5+15,-3);
		\draw (4+15,-2.5) node {$\dots$};
		\draw (12+15,-2.5) node {$\dots$};
		\draw (2+15,-2.5) node {$M^{(i^{\star})}$};
		\foreach \x in {-2,-1,2,3}
		{
			\draw (\x+0.5+7+15,-2.5) node {$\times$};
		}
		\foreach \x in {1}
		{
			\draw (\x+0.5+7+15,-2.5) node {$\bullet$};
		}
		
		\draw[] (7.5+15,-2) to[out=90,in=-90] node[pos=0.5,right]{\footnotesize{new}} (8.5+15,0);
	\end{tikzpicture}
	\caption{New factors for~$j<i^{\star}$.}
	\label{fig:NewFactors1}
\end{figure}
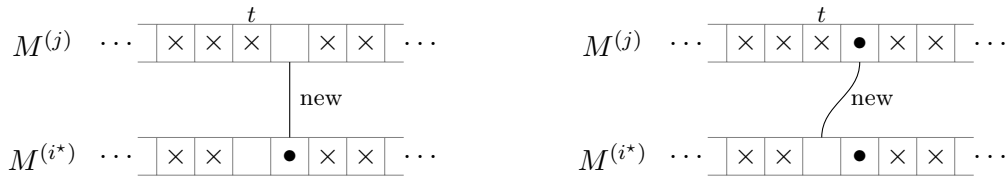 
	
Next, assume that~$j>i^{\star}$ and consider the box located at~$t$ in~$M^{(j)}$. If it is empty, then this box together with the filled box at~$t+1$ in~$M^{(i^{\star})}$ leads to a factor, if it is filled, then a factor is obtained via a combination with the empty box at~$t$ in~$M^{(i^{\star})}$, see Figure~\ref{fig:NewFactors2}. Again, these are new factors.
	
From the third part we conclude that there are exactly~$p-1$ new factors in $H_{\text{non-$p$-fold}}(\lambda(k))$ that were not present in~$H_{\text{non-$p$-fold}}(\tilde{\lambda}(k))$, and that each such factor is obtained via a filled and an empty box in~$M^{(i^{\star})}$ and~$M^{(j)}$ with~$j\neq i^{\star}$. Via~\eqref{eq:HookLengthsNonPFoldBis}, the product of these new factors is given by
\begin{equation}\label{eq:ExtraFactors}
	\prod_{\substack{j=0\\ j\neq i^{\star}}}^{p-1} \big((-1)^{h_{i^{\star},j}} \, p(c_{i^{\star}}(k)-c_j(k)) +d_{i^{\star},j} )\big)
		= (pk)^{p-1} \prod_{\substack{j=0\\ j\neq i^{\star}}}^{p-1} \big( (-1)^{h_{i^{\star},j}} (a_{i^{\star}}-a_j) \big) + o(k^{p-1})
\end{equation}
as~$k\to+\infty$ for some $d_{i^{\star},j}\in\mathbb{Z}$, and where $(-1)^{h_{i^{\star},j}}$ is needed to indicate if the factor is obtained from a filled box in~$M^{(i^{\star})}$ or from a filled box in~$M^{(j)}$. 

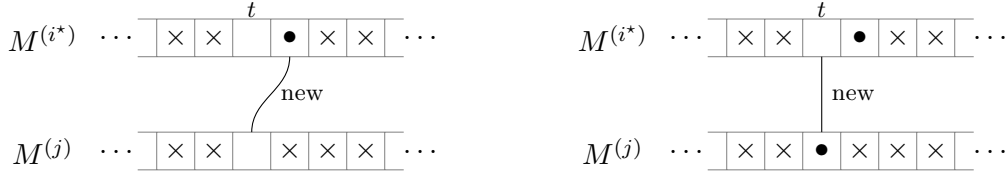
\begin{figure}[t]
	\centering
	\begin{tikzpicture}[scale=0.5]
	\draw[very thin,color=gray] (4.5,1) grid (11.5,0);
	\draw (4,0.5) node {$\dots$};
	\draw (12,0.5) node {$\dots$};
	\draw (2,0.5) node {$M^{(i^{\star})}$};
	\foreach \x in {-2,-1,2,3}
	{
		\draw (\x+0.5+7,0.5) node {$\times$};
	}
	\foreach \x in {1}
	{
		\draw (\x+0.5+7,0.5) node {$\bullet$};
	}
	\draw[] (7.5,1.25) node {\footnotesize{$t$}};
	
	\draw[very thin,color=gray] (4.5,-2) grid (11.5,-3);
	\draw (4,-2.5) node {$\dots$};
	\draw (12,-2.5) node {$\dots$};
	\draw (2,-2.5) node {$M^{(j)}$};
	\foreach \x in {-2,-1,1,2,3}
	{
		\draw (\x+0.5+7,-2.5) node {$\times$};
	}
	\foreach \x in {}
	{
		\draw (\x+0.5+7,-2.5) node {$\bullet$};
	}
	
	\draw[] (7.5,-2) to[out=90,in=-90] node[pos=0.5,right]{\footnotesize{new}} (8.5,0);
	
	\draw[very thin,color=gray] (4.5+15,1) grid (11.5+15,0);
	\draw (4+15,0.5) node {$\dots$};
	\draw (12+15,0.5) node {$\dots$};
	\draw (2+15,0.5) node {$M^{(i^{\star})}$};
	\foreach \x in {-2,-1,2,3}
	{
		\draw (\x+0.5+7+15,0.5) node {$\times$};
	}
	\foreach \x in {1}
	{
		\draw (\x+0.5+7+15,0.5) node {$\bullet$};
	}
	\draw[] (7.5+15,1.25) node {\footnotesize{$t$}};
	
	\draw[very thin,color=gray] (4.5+15,-2) grid (11.5+15,-3);
	\draw (4+15,-2.5) node {$\dots$};
	\draw (12+15,-2.5) node {$\dots$};
	\draw (2+15,-2.5) node {$M^{(j)}$};
	\foreach \x in {-2,-1,1,2,3}
	{
		\draw (\x+0.5+7+15,-2.5) node {$\times$};
	}
	\foreach \x in {0}
	{
		\draw (\x+0.5+7+15,-2.5) node {$\bullet$};
	}
	
	\draw[] (7.5+15,-2) to[out=90,in=-90] node[pos=0.5,right]{\footnotesize{new}} (7.5+15,0);
	\end{tikzpicture}
	\caption{New factors for~$j>i^{\star}$.}
	\label{fig:NewFactors2}
\end{figure} 
	
The overall conclusion is that the difference of the (leading) behavior in~$k$ between the fractions  $H_{\text{non-$p$-fold}}(\lambda(k))/H(\bar{\lambda}(k))$ and $H_{\text{non-$p$-fold}}(\tilde{\lambda}(k))/H(\bar{\lambda}(k))$ is given by~\eqref{eq:ExtraFactors}. Hence, if we recall $|\tilde{\mu}|=|\mu|-1$, we get that the coefficient associated to~$k^{(p-1)|\mu|}$ in $H_{\text{non-$p$-fold}}(\lambda(k)))/H(\bar{\lambda}(k))$ equals the coefficient associated to~$k^{(p-1)|\tilde{\mu}|}$ in $H_{\text{non-$p$-fold}}(\tilde{\lambda}(k)))/H(\bar{\lambda}(k))$ times
\begin{equation}\label{eq:ProofFraction2}
	p^{p-1} \prod_{\substack{j=0\\ j\neq i^{\star}}}^{p-1} \big( (-1)^{h_{i^{\star},j}} (a_{i^{\star}}-a_j) \big)
		= (-1)^{d} \, p^{p-1} \alpha_{i^{\star}}
\end{equation}
with~$\alpha_{i^{\star}}=\prod_{j\neq i^{\star}} (a_{i^{\star}}-a_j)$ and
\begin{equation}\label{eq:Definitiond}
	d
		= \# \{j < i^{\star} \mid t+1\in M^{(j)}\} + \# \{j > i^{\star} \mid t\in M^{(j)}\}
\end{equation}
which indicates the number of new factors that are obtained from the empty box located at~$t$ in~$M^{(i^{\star})}$. Applying the induction and recalling that $|\tilde{\mu}^{(i^{\star})}|=|\mu^{(i^{\star})}|-1$ and $|\tilde{\mu}^{(i)}|=|\mu^{(i)}|$ for~$i\neq i^{\star}$, as well as using~\eqref{eq:ProofFraction2}, yields that the coefficient associated to~$k^{(p-1)|\mu|}$ is given by
\begin{equation}\label{eq:ProofFraction2.5}
	(-1)^{d+\htt_p(\tilde{\lambda}(k)/\bar{\lambda}(k))} \, p^{(p-1)|\mu|} \, \prod_{i=0}^{p-1}\alpha_{i}^{|\mu^{(i)}|}.
\end{equation}
We therefore obtain the result if
\begin{equation}\label{eq:ProofFraction3}
	d
		\equiv \htt(\lambda(k)/\tilde{\lambda}(k)) \mod 2.
\end{equation}
To show this, recall that $\tilde{\mu}<_1\mu$ and so the difference between both Young diagrams of~$\lambda(k)$ and~$\tilde{\lambda}(k)$ is a border strip of size~$p$. In terms of the Maya diagrams~$M_{\tilde{\lambda}(k)}$ and~$M_{\lambda(k)}$, a bullet at place~$T$ in~$M_{\tilde{\lambda}(k)}$ moves~$p$ steps to the right, and thereby passing exactly~$d$ filled other boxes, see Figure~\ref{fig:JumpFilledBox}. This exactly means that $d=\htt(\lambda(k)/\tilde{\lambda}(k))$, see Lemma~\ref{lem:HeightMayaDiagram}, and thereby confirms~\eqref{eq:ProofFraction3}.
\end{proof}

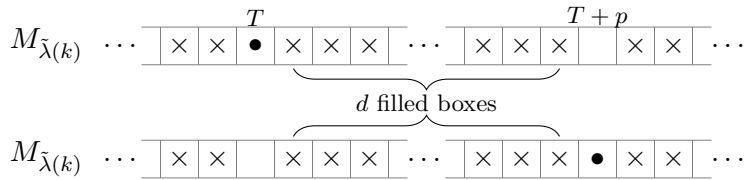
\begin{figure}[!h]
	\centering
	\begin{tikzpicture}[scale=0.5]
		\draw[very thin,color=gray] (-1.5,1) grid (5.5,0);
		\draw (-2,0.5) node {$\dots$};
		\draw (-4,0.5) node {$M_{\tilde{\lambda}(k)}$};
		\foreach \x in {-8,-7,-5,-4,-3}
		{
			\draw (\x+0.5+7,0.5) node {$\times$};
		}
		\draw (7+0.5-6,0.5) node {$\bullet$};
		\draw (6,0.5) node {$\dots$};
		\draw[very thin,color=gray] (6.5,1) grid (13.5,0);
		\foreach \x in {0,1,2,4,5}
		{
			\draw (\x+0.5+7,0.5) node {$\times$};
		}
		\draw (14,0.5) node {$\dots$};
		\draw[] (1.5,1.25) node {\footnotesize{$T$}};
		\draw[] (10.5,1.25) node {\footnotesize{$T+p$}};
		\draw [decorate,decoration={brace,amplitude=8pt,mirror}] (2.5,-0.1) -- (9.5,-0.1) node [black,midway,yshift=-13pt] 
		{\footnotesize $d$ filled boxes};
		
		\draw[very thin,color=gray] (-1.5,-2) grid (5.5,-3);
		\draw (-2,-2.5) node {$\dots$};
		\draw (-4,-2.5) node {$M_{\tilde{\lambda}(k)}$};
		\foreach \x in {-8,-7,-5,-4,-3}
		{
			\draw (\x+0.5+7,-2.5) node {$\times$};
		}
		\draw (7+0.5+3,-2.5) node {$\bullet$};
		\draw (6,-2.5) node {$\dots$};
		\draw[very thin,color=gray] (6.5,-2) grid (13.5,-3);
		\foreach \x in {0,1,2,4,5}
		{
			\draw (\x+0.5+7,-2.5) node {$\times$};
		}
		\draw (14,-2.5) node {$\dots$};
		\draw [decorate,decoration={brace,amplitude=8pt}] (2.5,-1.9) -- (9.5,-1.9) node [black,midway,yshift=-15pt] 
		{};
	\end{tikzpicture}
	\caption{The movement of the filled box.}
	\label{fig:JumpFilledBox}
\end{figure}

\begin{example}\label{ex:HookLengthsProof}
We show the main idea of the previous proof using the partitions $\lambda=(6,5,4,1,1)$ and $\tilde{\lambda}=(4,4,4,1,1)$. Both partitions have $3$-core~$(1,1)$, or equivalently $c_{\lambda}=c_{\tilde{\lambda}}=(1,-1,0)$. The $3$-quotient is different for both partitions, we have $\mu=((1),(2),(2))$ while $\tilde{\mu}=((1),(2),(1))$, and so $\tilde{\mu}<_1 \mu$. More precisely, in Figure~\ref{fig:HookLengthsQuotientProof} we have that $\widetilde{M}^{(0)}=M^{(0)}$, $\widetilde{M}^{(1)}=M^{(1)}$, but $\widetilde{M}^{(2)}\neq M^{(2)}$. In these last two different Maya diagrams, a bullet is moved 1 step to the right, and therefore the Maya diagram~$\widehat{M}_{\lambda}$ can be obtained from~$\widehat{M}_{\tilde{\lambda}}$ by moving a bullet 3 steps to the right. The associated boxes are indicated with white bullets. Note that by the movement of the white bullet, it passes 1 black bullet which equals the height of the border strip $\lambda/\tilde{\lambda}$. It also implies that 1 of the new factors, as described in the previous proof, is obtained from an empty box in~$M^{(2)}$, while the other new one is obtained from a filled box in~$M^{(2)}$.
	
\begin{figure}[!h]
	\centering
	\begin{tikzpicture}[scale=0.35]
		\foreach \x in {-6,-3,0,3,6}
		{
			\draw[fill=black!08!white,draw=none] (7+\x,1) rectangle (7+1+\x,0);
		}
		\foreach \x in {-7,-4,-1,2,5}
		{
			\draw[fill=black!32!white,draw=none] (7+\x,1) rectangle (7+1+\x,0);
		}
		\foreach \x in {-8,-5,-2,1,4,7}
		{
			\draw[fill=black!20!white,draw=none] (7+\x,1) rectangle (7+1+\x,0);
		}
		\draw[fill=black!32!white,draw=none] (7+8,1) rectangle (7+0.5+8,0);
		\draw[fill=black!08!white,draw=none] (7-9+0.5,1) rectangle (7-8,0);
		\draw[very thin,color=gray] (-1.5,1) grid (15.5,0);
		\draw[thick,color=black] (7,1.4) -- (7,-0.4);
		\draw (-2,0.5) node {$\dots$};
		\draw (16.25,0.5) node {$\dots$};
		\draw (-4,0.5) node {$\widehat{M}_{\tilde{\lambda}}$};
		\foreach \x in {-8,-7,-6,-4,-3,1,,3}
		{
			\draw (\x+7+0.5,0.5) node {$\bullet$};
		}
		\draw[white] (2+7+0.5,0.5) node {$\bullet$};
		
		\draw[fill=black!08!white,draw=none] (2.5,-2) rectangle (11.5,-3);
		\draw[very thin,color=gray] (2.5,-2) grid (11.5,-3);
		\draw[thick,color=black] (7,-1.6) -- (7,-3.4);
		\draw[thick,color=black,dotted] (8,-1.6) -- (8,-3.4);
		\draw (2,-2.5) node {$\dots$};
		\draw (12.25,-2.5) node {$\dots$};
		\draw (0,-2.5) node {$\widetilde{M}^{(0)}$};
		\foreach \x in {-4,-3,-2,-1,1}
		{
			\draw (\x+7+0.5,-2.5) node {$\bullet$};
		}
		
		\draw[fill=black!20!white,draw=none] (2.5,-5) rectangle (11.5,-6);
		\draw[very thin,color=gray] (2.5,-5) grid (11.5,-6);
		\draw[thick,color=black] (7,-4.6) -- (7,-6.4);
		\draw[thick,color=black,dotted] (6,-4.6) -- (6,-6.4);
		\draw (2,-5.5) node {$\dots$};
		\draw (12.25,-5.5) node {$\dots$};
		\draw (0,-5.5) node {$\widetilde{M}^{(1)}$};
		\foreach \x in {-4,-3,0}
		{
			\draw (\x+7+0.5,-5.5) node {$\bullet$};
		}
		
		\draw[fill=black!32!white,draw=none] (2.5,-8) rectangle (11.5,-9);
		\draw[very thin,color=gray] (2.5,-8) grid (11.5,-9);
		\draw[thick,color=black] (7,-7.6) -- (7,-9.4);
		\draw (2,-8.5) node {$\dots$};
		\draw (12.25,-8.5) node {$\dots$};
		\draw (0,-8.5) node {$\widetilde{M}^{(2)}$};
		\foreach \x in {-4,-3,-2}
		{
			\draw (\x+7+0.5,-8.5) node {$\bullet$};
		}	
		\draw[white] (0+7+0.5,-8.5) node {$\bullet$};
	\end{tikzpicture}
	\begin{tikzpicture}[scale=0.35]
		\foreach \x in {-6,-3,0,3,6}
		{
			\draw[fill=black!08!white,draw=none] (7+\x,1) rectangle (7+1+\x,0);
		}
		\foreach \x in {-7,-4,-1,2,5}
		{
			\draw[fill=black!32!white,draw=none] (7+\x,1) rectangle (7+1+\x,0);
		}
		\foreach \x in {-8,-5,-2,1,4,7}
		{
			\draw[fill=black!20!white,draw=none] (7+\x,1) rectangle (7+1+\x,0);
		}
		\draw[fill=black!32!white,draw=none] (7+8,1) rectangle (7+0.5+8,0);
		\draw[fill=black!08!white,draw=none] (7-9+0.5,1) rectangle (7-8,0);
		\draw[very thin,color=gray] (-1.5,1) grid (15.5,0);
		\draw[thick,color=black] (7,1.4) -- (7,-0.4);
		\draw (-2,0.5) node {$\dots$};
		\draw (16.25,0.5) node {$\dots$};
		\draw (-4,0.5) node {$\widehat{M}_{\lambda}$};
		\foreach \x in {-8,-7,-6,-4,-3,1,3}
		{
			\draw (\x+0.5+7,0.5) node {$\bullet$};
		}
		\draw[white] (5+7+0.5,0.5) node {$\bullet$};

		\draw[fill=black!08!white,draw=none] (2.5,-2) rectangle (11.5,-3);
		\draw[very thin,color=gray] (2.5,-2) grid (11.5,-3);
		\draw[thick,color=black] (7,-1.6) -- (7,-3.4);
		\draw[thick,color=black,dotted] (8,-1.6) -- (8,-3.4);
		\draw (2,-2.5) node {$\dots$};
		\draw (12.25,-2.5) node {$\dots$};
		\draw (0,-2.5) node {$M^{(0)}$};
		\foreach \x in {-4,-3,-2,-1,1}
		{
			\draw (\x+7+0.5,-2.5) node {$\bullet$};
		}
		
		\draw[fill=black!20!white,draw=none] (2.5,-5) rectangle (11.5,-6);
		\draw[very thin,color=gray] (2.5,-5) grid (11.5,-6);
		\draw[thick,color=black] (7,-4.6) -- (7,-6.4);
		\draw[thick,color=black,dotted] (6,-4.6) -- (6,-6.4);
		\draw (2,-5.5) node {$\dots$};
		\draw (12.25,-5.5) node {$\dots$};
		\draw (0,-5.5) node {$M^{(1)}$};
		\foreach \x in {-4,-3,0}
		{
			\draw (\x+7+0.5,-5.5) node {$\bullet$};
		}
		
		\draw[fill=black!32!white,draw=none] (2.5,-8) rectangle (11.5,-9);
		\draw[very thin,color=gray] (2.5,-8) grid (11.5,-9);
		\draw[thick,color=black] (7,-7.6) -- (7,-9.4);
		\draw (2,-8.5) node {$\dots$};
		\draw (12.25,-8.5) node {$\dots$};
		\draw (0,-8.5) node {$M^{(2)}$};
		\foreach \x in {-4,-3,-2}
		{
			\draw (\x+7+0.5,-8.5) node {$\bullet$};
		}	
		\draw[white] (1+7+0.5,-8.5) node {$\bullet$};
		\draw[gray] (7.5,-8) to[out=90,in=-90] node[pos=0.8,left]{\footnotesize{{\color{gray} new}}} (8.5,-3);
		\draw[gray] (8.5,-8) to[out=90,in=-90] node[pos=0.5,right]{\footnotesize{{\color{gray} new}}} (8.5,-6);
	\end{tikzpicture}
	\caption{The new hook lengths represented in the Maya diagrams of the $3$-quotient (Example~\ref{ex:HookLengthsProof}).}
	\label{fig:HookLengthsQuotientProof}
\end{figure}
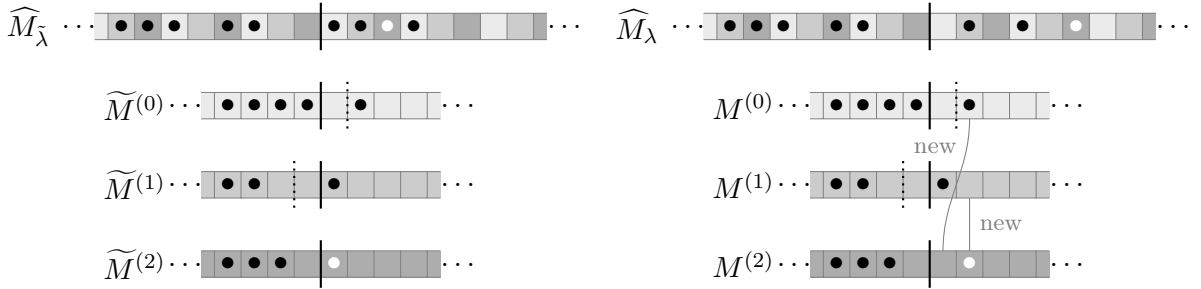
\end{example}

\subsection{Asymptotic behavior of the coefficients}\label{sec:BehaviorCoefficients}
In this section we use Lemma~\ref{lem:HookLengths} to obtain the behavior of~$r_{\lambda(k),j}$ as indicated in~\eqref{eq:CoefficientLeadingBehavior}. As before, let~$\lambda(k)$ be the partition identified with the $p$-quotient $\mu$ and the $p$-core associated to~$c(k)$.

\begin{lemma}\label{lem:BehaviorCoefficient}
For each~$j$, the coefficient~$r_{\lambda(k),j}$ described in~\eqref{eq:Coefficient} satisfies 
\begin{equation}\label{eq:BehaviorCoefficient}
	r_{\lambda(k),j} 
		= L_j (kp)^{(p-1)j} + o(k^{(p-1)j})
\end{equation}
as~$k\to + \infty$, and where~$L_j$ is defined in~\eqref{eq:L}.
\end{lemma}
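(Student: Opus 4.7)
The plan is to substitute the hook-length asymptotics of Lemma~\ref{lem:HookLengths} into the combinatorial expression~\eqref{eq:Coefficient} and to simplify the resulting sum using the multiplicativity of $F^{(p)}$ over the $p$ coordinates recorded in~\eqref{eq:Fp}.

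First, for each $\tilde{\mu}<_j\mu$, let $\tilde{\lambda}(k)$ be the partition with $p$-quotient $\tilde{\mu}$ and the same $p$-core $\bar{\lambda}(k)$ as $\lambda(k)$. Applying Lemma~\ref{lem:HookLengths} to both $\lambda(k)$ and $\tilde{\lambda}(k)$ and dividing (the common denominator $H(\bar{\lambda}(k))$ drops out) I would obtain
\[
	\frac{H_{\text{non-$p$-fold}}(\lambda(k))}{H_{\text{non-$p$-fold}}(\tilde{\lambda}(k))}
		= (-1)^{s}\, p^{(p-1)j}\prod_{i=0}^{p-1}\alpha_i^{\,l_i}\, k^{(p-1)j} + o(k^{(p-1)j}),
\]
where $l_i := |\mu^{(i)}|-|\tilde{\mu}^{(i)}|$ and $s = \htt_p(\lambda(k)/\bar{\lambda}(k))-\htt_p(\tilde{\lambda}(k)/\bar{\lambda}(k))$. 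Because one can reach $\bar{\lambda}(k)$ from $\lambda(k)$ by first descending to $\tilde{\lambda}(k)$, the parity invariance of $\htt_p$ recalled in Section~\ref{sec:CoresAndQuotients} forces $\htt_p(\lambda/\bar{\lambda}) \equiv \htt_p(\lambda/\tilde{\lambda}) + \htt_p(\tilde{\lambda}/\bar{\lambda}) \pmod 2$, so $s \equiv \htt_p(\lambda/\tilde{\lambda}) \pmod 2$. This exactly cancels the factor $(-1)^{\htt_p(\lambda/\tilde{\lambda})}$ already present in~\eqref{eq:Coefficient}, leaving only the overall $(-1)^j$.

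The remaining task is the purely combinatorial identity
\[
	\binom{|\mu|}{j} \sum_{\tilde{\mu}<_j\mu} \frac{F^{(p)}_{\tilde{\mu}} F^{(p)}_{\mu/\tilde{\mu}}}{F^{(p)}_{\mu}} \prod_{i=0}^{p-1}\alpha_i^{\,l_i}
		= \sum_{l_0+\cdots+l_{p-1}=j} \prod_{i=0}^{p-1} \binom{|\mu^{(i)}|}{l_i}\alpha_i^{\,l_i}.
\]
To prove this, expand each $F^{(p)}$ via~\eqref{eq:Fp}: the multinomial prefactors collapse (via $|\tilde{\mu}|+j=|\mu|$) to $\binom{|\mu|}{j}^{-1}\prod_i \binom{|\mu^{(i)}|}{l_i}$, and the remainder factors across the $p$ components as $\prod_i F_{\tilde{\mu}^{(i)}} F_{\mu^{(i)}/\tilde{\mu}^{(i)}}/F_{\mu^{(i)}}$. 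Grouping the outer sum over $\tilde{\mu}$ by the composition $(l_0,\dots,l_{p-1})$ of $j$, the inner sum over each $\tilde{\mu}^{(i)}<_{l_i}\mu^{(i)}$ factors out separately and telescopes to $1$ by the path-counting identity~\eqref{eq:IdentityCountingPaths}. Multiplying the resulting expression by $(-1)^j(pk)^{(p-1)j}$ reproduces $L_j(pk)^{(p-1)j}$, establishing~\eqref{eq:BehaviorCoefficient}.

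The main obstacle is the sign cancellation; once the parity invariance of $\htt_p$ is exploited to merge the three $\htt_p$-dependent signs into a single even exponent, the rest is routine multinomial bookkeeping.
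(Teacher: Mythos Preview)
Your proposal is correct and follows essentially the same route as the paper: apply Lemma~\ref{lem:HookLengths} to both $\lambda(k)$ and $\tilde{\lambda}(k)$ to obtain the asymptotics of the hook-length ratio, use the parity invariance of $\htt_p$ to cancel the sign $(-1)^{\htt_p(\lambda/\tilde{\lambda})}$, then expand the $F^{(p)}$'s via~\eqref{eq:Fp}, group by the composition $(l_0,\dots,l_{p-1})$, and collapse each inner sum to~$1$ with~\eqref{eq:IdentityCountingPaths}. The paper organizes the steps in the same order and with the same ingredients; your multinomial simplification $\binom{|\mu|}{j}\cdot\frac{F^{(p)}_{\tilde{\mu}}F^{(p)}_{\mu/\tilde{\mu}}}{F^{(p)}_{\mu}}=\prod_i\binom{|\mu^{(i)}|}{l_i}\cdot\prod_i\frac{F_{\tilde{\mu}^{(i)}}F_{\mu^{(i)}/\tilde{\mu}^{(i)}}}{F_{\mu^{(i)}}}$ is exactly what appears (implicitly) between~\eqref{eq:ProofBehaviorCoefficient3.5} and~\eqref{eq:ProofBehaviorCoefficient4}.
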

\begin{proof}
Recall that
\begin{equation}\label{eq:ProofBehaviorCoefficient1}
	r_{\lambda(k),j}
		= (-1)^j \binom{|\mu|}{j} 
		\sum_{\tilde{\mu}<_j \mu} (-1)^{\htt_p(\lambda(k)/\tilde{\lambda}(k))} 
		\frac{F^{(p)}_{\tilde{\mu}} \,F^{(p)}_{\mu/\tilde{\mu}}}{F^{(p)}_{\mu}} 
		\,
		\frac{H_{\text{non-$p$-fold}}(\lambda(k))}{H_{\text{non-$p$-fold}}(\tilde{\lambda}(k))}
\end{equation}
where the partition~$\lambda(k)$ (respectively~$\tilde{\lambda}(k)$) is identified with the $p$-quotient~$\mu$ (respectively~$\tilde{\mu}$) and the $p$-core associated to~$c(k)$. In the above expression, we sum over all $p$-tuples of partitions $\tilde{\mu}=(\tilde{\mu}^{(0)},\tilde{\mu}^{(1)},\dots,\tilde{\mu}^{(p-1)})$ such that $\tilde{\mu}^{(i)} \leq \mu^{(i)}$ for all $i$, and $\sum_i \left(|\mu^{(i)}|-|\tilde{\mu}^{(i)}|\right) = j$. Set $l_i=|\mu^{(i)}|-|\tilde{\mu}^{(i)}|$ and rewrite \eqref{eq:ProofBehaviorCoefficient1} as
\begin{multline}\label{eq:ProofBehaviorCoefficient2}
	r_{\lambda(k),j}
		= (-1)^j \binom{|\mu|}{j} 
		\sum_{l_0+\cdots+l_{p-1}=j}
		\sum_{\tilde{\mu}^{(0)}<_{l_0} \mu^{(0)}} 
		\cdots
		\sum_{\tilde{\mu}^{(p-1)}<_{l_{p-1}} \mu^{(p-1)}} 
		(-1)^{\htt_p(\lambda(k)/\tilde{\lambda}(k))} 
		\\
		\times
		\frac{F^{(p)}_{\tilde{\mu}} \,F^{(p)}_{\mu/\tilde{\mu}}}{F^{(p)}_{\mu}} 
		\,
		\frac{H_{\text{non-$p$-fold}}(\lambda(k))}{H_{\text{non-$p$-fold}}(\tilde{\lambda}(k))}
\end{multline}
where we now sum over all non-negative integers $l_0,l_1,\dots,l_{p-1}$ such that~$\sum_i l_i=j$.
	
As~$\lambda(k)$ and~$\tilde{\lambda}(k)$ have the same $p$-core, we can apply Lemma~\ref{lem:HookLengths} to write the last fraction in~\eqref{eq:ProofBehaviorCoefficient2} as
\begin{equation}\label{eq:ProofBehaviorCoefficient3}
	\frac{H_{\text{non-$p$-fold}}(\lambda(k))}{H_{\text{non-$p$-fold}}(\tilde{\lambda}(k))}
		= (-1)^{\htt_p(\lambda(k)/\tilde{\lambda}(k))} (pk)^{(p-1)j}\prod_{i=0}^{p-1}\alpha_{i}^{|\mu^{(i)}|-|\tilde{\mu^{(i)}}|}
		+ o( k^{(p-1)j})
\end{equation}
as~$k\to + \infty$, and where we used that
\begin{equation}\label{eq:IdentityHeight}
	\htt_p(\lambda(k)/\bar{\lambda}(k)) - \htt_p(\tilde{\lambda}(k)/\bar{\lambda}(k))
		\equiv \htt_p(\lambda(k)/\tilde{\lambda}(k)) \mod 2.
\end{equation}
Combining \eqref{eq:ProofBehaviorCoefficient2} and \eqref{eq:ProofBehaviorCoefficient3} yields 
\begin{multline}\label{eq:ProofBehaviorCoefficient3.5}
	(pk)^{-(p-1)j} \, r_{\lambda(k),j}
		= (-1)^j \binom{|\mu|}{j} 
		\sum_{l_0+\cdots+l_{p-1}=j}
		\alpha_0^{l_0} \alpha_1^{l_1} \cdots \alpha_{p-1}^{l_{p-1}} 
		\\
		\times
		\sum_{\tilde{\mu}^{(0)}<_{l_0} \mu^{(0)}} 
		\cdots
		\sum_{\tilde{\mu}^{(p-1)}<_{l_{p-1}} \mu^{(p-1)}} 
		\frac{F^{(p)}_{\tilde{\mu}} \,F^{(p)}_{\mu/\tilde{\mu}}}{F^{(p)}_{\mu}} + o(1)
\end{multline}
as~$k\to+\infty$. If we simplify the binomial coefficient and the numbers in the last fraction using~\eqref{eq:Fp}, we get that the right-hand side equals
\begin{multline}\label{eq:ProofBehaviorCoefficient4}
	(-1)^j \sum_{l_0+\cdots+l_{p-1}=j}
	\alpha_0^{l_0} \alpha_1^{l_1} \cdots \alpha_{p-1}^{l_{p-1}} \binom{|\mu^{(0)}|}{l_0} \binom{|\mu^{(1)}|}{l_1} \cdots \binom{|\mu^{(p-1)}|}{l_{p-1}}
	\\
	\times
	\left(\sum_{\tilde{\mu}^{(0)}<_{l_0} \mu^{(0)}} \frac{F_{\tilde{\mu}^{(0)}} F_{\mu^{(0)}/\tilde{\mu}^{(0)}}}{F_{\mu^{(0)}}}\right)
	\cdots
	\left(\sum_{\tilde{\mu}^{(p-1)}<_{l_{p-1}} \mu^{(p-1)}} \frac{F_{\tilde{\mu}^{(p-1)}} F_{\mu^{(p-1)}/\tilde{\mu}^{(p-1)}}}{F_{\mu^{(p-1)}}}\right) + o(1).
\end{multline}
From~\eqref{eq:IdentityCountingPaths} we find that each sum in the second line of~\eqref{eq:ProofBehaviorCoefficient4} equals 1. Hence, this yields
\begin{equation}\label{eq:ProofBehaviorCoefficient5}
	r_{\lambda(k),j}
		= L_j (pk)^{(p-1)j} + o( k^{(p-1)j} )
\end{equation}
as~$k\to + \infty$, and where~$L_j$ is defined in~\eqref{eq:L}. This ends the proof.
\end{proof}

\subsection{Asymptotic behavior of the polynomial}\label{sec:AsymptoticBehavior}
Via Lemma~\ref{lem:BehaviorCoefficient} we now obtain the asymptotic result stated in~\eqref{eq:AsymptoticResult}.

\begin{proof}[Proof of Theorem~\ref{thm:AsymptoticResult}]
We first determine the behavior of the fraction in the left-hand side of~\eqref{eq:AsymptoticResult}, and then take the limit to find the result. Via the expansion~\eqref{eq:ExpansionR} we get
\begin{equation}\label{eq:ProofAsymptoticResult1}
	\frac{R_{\lambda(k)}\left((pk)^{p-1}x\right)}{(pk)^{(p-1)|\mu|}}
		= \sum_{j=0}^{|\mu|} \frac{r_{\lambda(k),j}}{(pk)^{(p-1)j}} \, x^{|\mu|-j}.
\end{equation}
We now use Lemma~\ref{lem:BehaviorCoefficient} to describe the behavior of the coefficient~$r_{\lambda(k),j}$. This yields
\begin{equation}\label{eq:ProofAsymptoticResult2}
	\frac{R_{\lambda(k)}\left((pk)^{p-1}x\right)}{(pk)^{(p-1)|\mu|}}
		= \sum_{j=0}^{|\mu|} \left(L_j+o(1)\right) \, x^{|\mu|-j}
\end{equation}
as~$k\to + \infty$, with~$L_j$ defined in~\eqref{eq:L}. Applying the limit gives
\begin{equation}\label{eq:ProofAsymptoticResult3}
	\lim_{k\to + \infty} \frac{R_{\lambda(k)}\left((pk)^{p-1}x\right)}{(pk)^{(p-1)|\mu|}}
		= \sum_{j=0}^{|\mu|} L_j \, x^{|\mu|-j}
\end{equation}
which equals the right-hand side of~\eqref{eq:AsymptoticResult} because of~\eqref{eq:BinomialExpansion}. This ends the proof.
\end{proof}

\begin{remark}
In the special case that all entries of the characteristic vector are multiples of~$k$, i.e., $c(k)=(a_0k, a_1k,\dots,a_{p-1}k)$ with $a_0,a_1,\dots,a_{p-1}\in\mathbb{Z}$ and $\sum_i a_i=0$, then Lemma~\ref{lem:BehaviorCoefficient} results in
\begin{equation}\label{eq:ResultSpecialCase}
	r_{\lambda(k),j} 
		= L_j (kp)^{(p-1)j} + O(k^{(p-1)j-1}).
\end{equation} 
This then implies that the speed of convergence in Theorem~\ref{thm:AsymptoticResult} is~$O(k^{-1})$, and that all zeros are attracted by integer values. An example is illustrated in Figure~\ref{fig:ConvergenceZeros}.
\end{remark}

\subsection{\texorpdfstring{The size of the $p$-core expressed via the characteristic vector}{The size of the p-core expressed via the characteristic vector}}\label{sec:SizeCore}

Each $p$-core is uniquely described via its characteristic vector. We now prove the expression for the size of the $p$-core in terms of the entries from the characteristic vector as stated in Remark~\ref{rem:SizeCore}. The proof is based on the ideas in Section~\ref{sec:HookLengths} whereas the result itself was already obtained in~\cite{Garvan_Kim_Stanton}. 

\begin{proposition}
For any positive integer~$p$ and any $p$-core $\bar{\lambda}$ described via the characteristic vector $(c_0,c_1,\dots,c_{p-1})$, we have that
\begin{equation}\label{eq:SizeCoreProof}
	|\bar{\lambda}|
		= \frac{p}{2} \sum_{j=0}^{p-1} c_j^2 + \sum_{j=1}^{p-1} j c_j.
\end{equation}	
\end{proposition}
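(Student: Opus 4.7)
The plan is to apply the construction of the $p$-core via its characteristic vector directly to the formula that the size $|\bar{\lambda}|$ counts hook lengths in $\widehat{M}_{\bar{\lambda}}$, via~\eqref{eq:HookLengthsMayaDiagram}. As a first step, I would convert this into a cleaner Maya-diagram formula valid for any partition $\lambda$: writing the filled nonnegative positions of $\widehat{M}_\lambda$ as $A = \{a \geq 0 : a \in \widehat{M}_\lambda\}$ and the negative empty positions as $-B$ with $B = \{b > 0 : -b \notin \widehat{M}_\lambda\}$, I claim
\begin{equation*}
	|\lambda| = \sum_{a \in A} a + \sum_{b \in B} b.
\end{equation*}
To prove this, split the set $\{(m,n) : m \in \widehat{M}_\lambda,\, n \notin \widehat{M}_\lambda,\, m>n\}$ according to the signs of $m$ and $n$. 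Enumerating $A = \{a_1 < \dots < a_s\}$ and $B = \{b_1 < \dots < b_s\}$ (equal cardinalities by the charge-zero condition), one gets contributions $\sum a_k - \binom{s}{2}$, $s^2$, and $\sum b_l - \binom{s+1}{2}$ from the three nonempty cases, whose binomial corrections cancel.

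Next I would specialize to the $p$-core. By definition~\eqref{eq:DefinitionCoreModularDecomposition}--\eqref{eq:DefinitionMBar} and $\widehat{M}_{\bar{\lambda}} = \widebar{M}$, the Maya diagram of $\bar{\lambda}$ is
\begin{equation*}
	\widehat{M}_{\bar{\lambda}} = \bigcup_{i=0}^{p-1} \{pm + i : m < c_i\}.
\end{equation*}
Therefore the filled positions $\geq 0$ are precisely the $pm+i$ with $0 \leq m \leq c_i - 1$ (contributing only when $c_i > 0$), and the empty positions $<0$ are the $pm+i$ with $c_i \leq m \leq -1$ (contributing only when $c_i < 0$).

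Applying the formula from the first paragraph, and using the elementary identities $\sum_{m=0}^{c_i - 1}(pm+i) = \frac{p c_i(c_i-1)}{2} + i c_i$ when $c_i>0$ and $\sum_{m=c_i}^{-1}(-(pm+i)) = \frac{p c_i(c_i-1)}{2} + i c_i$ when $c_i<0$, both sums collapse into a single identical expression that also evaluates to zero when $c_i = 0$. Hence
\begin{equation*}
	|\bar{\lambda}| = \sum_{i=0}^{p-1} \left( \frac{p c_i(c_i-1)}{2} + i c_i \right) = \frac{p}{2}\sum_{i=0}^{p-1} c_i^2 - \frac{p}{2}\sum_{i=0}^{p-1} c_i + \sum_{i=1}^{p-1} i c_i,
\end{equation*}
and the constraint $\sum_i c_i = 0$ eliminates the middle term, yielding~\eqref{eq:SizeCoreProof}. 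The only nontrivial step is the bookkeeping in the first paragraph, which is the standard translation between Maya diagrams and partition sizes; the rest is a routine symmetric calculation that profits from the fact that the summand $\frac{p c_i(c_i-1)}{2} + i c_i$ happens to have the same algebraic form in the positive and negative cases.
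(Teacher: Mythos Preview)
Your proof is correct and takes a genuinely different route from the paper's.

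The paper counts the hook-length pairs through the $p$-modular decomposition~\eqref{eq:HookLengthsMayaDiagramQuotient}: it argues, via a four-case analysis on the signs of $i-j$ and $c_i-c_j$, that each unordered pair $\{i,j\}$ with $i<j$ contributes $(c_i-c_j)(c_i-c_j-1)/2$ factors, obtains the double sum~\eqref{eq:SizeCoreAlternativeExpression}, and then simplifies using the identities~\eqref{eq:Expansion0}--\eqref{eq:Expansion2}. You instead first isolate a general single-partition identity $|\lambda|=\sum_{a\in A}a+\sum_{b\in B}b$ for the charge-zero Maya diagram (essentially the Frobenius-coordinate area formula), and only then specialize to the $p$-core, where $A$ and $B$ are explicit unions of arithmetic progressions indexed by~$i$. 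Because the summand $\tfrac{p\,c_i(c_i-1)}{2}+i c_i$ has the same algebraic form for $c_i>0$, $c_i<0$, and $c_i=0$, your sum decouples over the single index~$i$ with no case analysis and no double-sum manipulation.

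What each approach buys: the paper's argument reuses the hook-length machinery of Section~\ref{sec:HookLengths} and so fits the surrounding narrative, but requires a case split that is only partially spelled out. Your argument is more self-contained and avoids the case analysis entirely, at the cost of establishing the auxiliary lemma in the first paragraph; that bookkeeping is routine and your sketch of the three sign cases (with contributions $\sum a_k-\binom{s}{2}$, $s^2$, and $\sum b_l-\binom{s+1}{2}$) is correct, the binomial terms cancelling against $s^2$.
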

\begin{proof}
The size of any partition equals the number of boxes in the Young diagram. Each box in the diagram has an associated hook length, hence the number of factors in the product over all hook lengths equals the size of the partition. To count the number of factors in~$H(\bar{\lambda})$, we express the hook lengths of~$\bar{\lambda}$ via the Maya diagrams of the quotient~$\widebar{M}^{(0)},\widebar{M}^{(1)},\dots,\widebar{M}^{(p-1)}$, that is
\begin{equation}\label{eq:HookLengthsMayaDiagramQuotient}
	H(\bar{\lambda})
		= \prod_{i=0}^{p-1} \prod_{j=0}^{p-1}\prod_{\substack{m\in \widebar{M}^{(i)} \\ n\notin \widebar{M}^{(j)} \\ pm+i>pn+j}} \big((pm+i)-(pn+j)\big)
\end{equation}
where $\widebar{M}^{(i)}=M_{\emptyset}+c_i$ for~$i=0,1,\dots,p-1$. As each Maya diagram is equivalent to~$M_{\emptyset}$, the last product in~\eqref{eq:HookLengthsMayaDiagramQuotient} has no factors when~$i=j$. We now argue that the last product of~\eqref{eq:HookLengthsMayaDiagramQuotient} has $(c_i-c_j)(c_i-c_j-1)/2$ factors when~$i\neq j$. We distinguish four possibilities.
\begin{enumerate}
	\item $i<j$ and $c_i>c_j$
	\item $i<j$ and $c_i\leq c_j$
	\item $j>i$ and $c_i<c_j$
	\item $j>i$ and $c_i\geq c_j$
\end{enumerate} 
The first possibility is visualized in Figure~\ref{fig:NumberConnectionsCore}. We directly observe that for each filled box in~$\widebar{M}^{(i)}$ located at~$t$ with $t\in\{c_j+1,c_j+2,\dots,c_i\}$, the empty boxes located at $t-1,t-2,\dots,c_j+1$ in~$\widebar{M}^{(j)}$ give rise to a factor in~$H(\bar{\lambda})$. Counting all possibilities directly gives the number of~$(c_i-c_j)(c_i-c_j-1)/2$ factors. The other three possibilities can be treated similarly and are left out. 
	
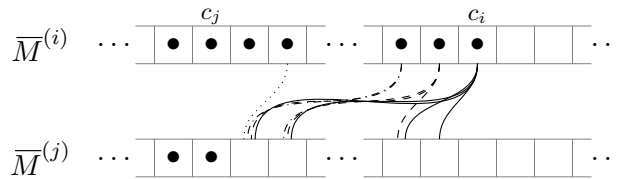
\begin{figure}[b]
	\centering
	\begin{tikzpicture}[scale=0.5]
		\draw[very thin,color=gray] (-0.5,1) grid (4.5,0);	
		\draw[very thin,color=gray] (5.5,1) grid (11.5,0);
		\draw (-1,0.5) node {$\dots$};
		\draw (-3,0.5) node {$\widebar{M}^{(i)}$};
		\foreach \x in {-7,-6,-5,-4,-1,0,1}
		{
			\draw (\x+0.5+7,0.5) node {$\bullet$};
		}
		\draw (5,0.5) node {$\dots$};
		\draw (12,0.5) node {$\dots$};
		\draw[] (1.5,1.25) node {\footnotesize{$c_j$}};
		\draw[] (8.5,1.25) node {\footnotesize{$c_i$}};

		\draw[very thin,color=gray] (-0.5,-2) grid (4.5,-3);
		\draw[very thin,color=gray] (5.5,-2) grid (11.5,-3);
		\draw (-1,-2.5) node {$\dots$};
		\draw (-3,-2.5) node {$\widebar{M}^{(j)}$};
		\foreach \x in {-7,-6}
		{
			\draw (\x+0.5+7,-2.5) node {$\bullet$};
		}
		\draw (5,-2.5) node {$\dots$};
		\draw (12,-2.5) node {$\dots$};
		
		\draw[] (8.5,0) to[out=-90,in=90] (7.5,-2);
		\draw[] (8.5,0) to[out=-90,in=90] (6.6,-2);
		\draw[] (8.5,0) to[out=-90,in=90] (3.6,-2);
		\draw[] (8.5,0) to[out=-90,in=90] (2.65,-2);
		
		\draw[dashed] (7.5,0) to[out=-90,in=90] (6.4,-2);
		\draw[dashed] (7.5,0) to[out=-90,in=90] (3.5,-2);
		\draw[dashed] (7.5,0) to[out=-90,in=90] (2.55,-2);
		
		\draw[dashdotted] (6.5,0) to[out=-90,in=90] (3.4,-2);
		\draw[dashdotted] (6.5,0) to[out=-90,in=90] (2.45,-2);
		
		\draw[dotted] (3.5,0) to[out=-90,in=90] (2.35,-2);
	\end{tikzpicture}
	\caption{Counting the number of hook lengths.}
	\label{fig:NumberConnectionsCore}
\end{figure}
	
As each pair~$(i,j)$ in~\eqref{eq:HookLengthsMayaDiagramQuotient} leads to $(c_i-c_j)(c_i-c_j-1)/2$ factors, and as the total number of factors in~$H(\bar{\lambda})$ equals $|\bar{\lambda}|$, we find that
\begin{equation}\label{eq:SizeCoreAlternativeExpression}
	|\bar{\lambda}|
		= \frac{1}{2}\sum_{0\leq i < j \leq p-1} (c_i-c_j)(c_i-c_j-1).
\end{equation}
We now rewrite this expression to obtain~\eqref{eq:SizeCoreProof}. Expanding the sum gives
\begin{equation}\label{eq:Expansion00}
	|\bar{\lambda}|
		= \frac{1}{2} \sum_{0\leq i < j \leq p-1} (c_i^2+c_j^2)
		+ \frac{1}{2} \sum_{0\leq i < j \leq p-1} (c_j-c_i)
		- \sum_{0\leq i < j \leq p-1} c_i \, c_j.
\end{equation}
An elementary calculation shows that the first two sums can be rewritten as
\begin{align}
	\sum_{0\leq i < j \leq p-1} (c_i^2+c_j^2)
		&= (p-1) \sum_{j=0}^{p-1} c_j^2, 
	\label{eq:Expansion0} \\
	\sum_{0\leq i < j \leq p-1} (c_j-c_i)
		&= 2 \sum_{j=0}^{p-1} j c_j + (1-p) \sum_{j=0}^{p-1} c_j.
	\label{eq:Expansion1}
\end{align}
Now we use that~$c_\lambda$ is a characteristic vector, i.e., $\sum_{i}c_i=0$, so that the last sum in~\eqref{eq:Expansion1} vanishes. Taking the square of both sides in the condition yields the identity
\begin{equation}\label{eq:Expansion2}
	\sum_{0\leq i < j \leq p-1} c_i \, c_j
		= -\frac{1}{2} \sum_{j=0}^{p-1} c_j^2.
\end{equation}
Hence, if we use~\eqref{eq:Expansion0}, \eqref{eq:Expansion1} and~\eqref{eq:Expansion2} to rewrite~\eqref{eq:Expansion00}, we get~\eqref{eq:SizeCoreProof}.
\end{proof}

\section*{Acknowledgments}
The author is thankful to Dan Betea, Arno Kuijlaars and Marco Stevens for their valuable feedback and carefully reading several versions of this paper. This work is supported in part by the long term structural funding~-- Methusalem grant of the Flemish Government, and by EOS project 30889451 of the Flemish Science Foundation (FWO).

\end{document}